\newtheorem{definition}{Definition}[section]
\newtheorem{theorem}[definition]{Theorem}
\newtheorem{proposition}[definition]{Proposition}
\newtheorem{example}[definition]{Example}
\newtheorem{remark}[definition]{Remark}
\newproof{proof}{\textbf{Proof}}
\journal{Indagationes Mathematicae}
\begin{document}

\begin{frontmatter}



\title{\textbf{Algebraic results on universal quantifiers in monoidal t-norm based logic}}


\author{Juntao Wang$^{*}$}
\cortext[cor1]{Corresponding author. \\
Email addresses: wjt@xsyu.edu.cn (J.T.Wang).}

\address {School of Science, Xi'an Shiyou University, Xi'an 710065, Shaanxi, China}

\begin{abstract}
In this paper, we enlarge the language of MTL-algebras by a unary operation $\forall$ equationally described so as to abstract algebraic properties of the universal quantifier ``for any" in its original meaning. The resulting class of algebras will be called \emph{MTL-algebras with universal quantifiers} (UMTL-algebras for short). After discussing some basic algebraic properties of UMTL-algebras, we start a systematic study of the main subclasses of UMTL-algebras, some of which constitute well known algebras: UMV-algebras and monadic Boolean algebra. Then we give some characterizations of representable, simple, semsimple UMTL-algebras, and obtain some representations of UMTL-algebras. Finally, we establish  modal monoidal t-norm based logic  and prove that is completeness with respect to the variety of UMTL-algebras,  and then obtain that a necessary and sufficient condition for the modal monoidal t-norm based logic to be semilinear.

\end{abstract}

\begin{keyword} Logical algebra, MTL-algebra, quantifier, representation, semilinear



\MSC[2010] 06D35 \sep 03G05
\end{keyword}

\end{frontmatter}

\section{Introduction}
\label{intro}
Non-classical logic takes the advantage of the classical logic to handle
uncertain information and fuzzy information. In recent decades, various logical algebras have been proposed as the semantical systems of
non classical logic, for example, MV-algebras, BL-algebras,
G\"{o}del algebras and MTL-algebras. Among these logical algebras, MTL-algebras are the most significant structures, since the others are
all particular
cases of them. MTL-algebras are the corresponding algebraic structures of monoidal t-norm based logic $\mathbf{MTL}$, which was introduced in \cite{Esteva} in order to give the propositional logic corresponding to left-continuous t-norms and their residua. As an algebraic point of view, MTL-algebras contain all algebras
induced by left continuous t-norm and their residua \cite{Jenei}. $\mathbf{MTL}$ can also be seen as a weaker logic of the H\'{a}jek's Basic Logic $\mathbf{BL}$, a general framework in which tautologies of continuous t-norm and their residua can be captured, by dropping this divisibility condition \cite{Hajek}.

The notion of a quantifier on a Boolean algebra was introduced by Halmos in \cite{Halmos} as an algebraic counterpart of the logical notion of an existential quantifier, and the algebras obtained in this way were called by Halmos monadic Boolean algebras. After then quantifiers have been considered by several authors in different algebras, for example, orthomodular lattices, Heyting algebras, distributive lattices, MV-algebras (Wajsberg algebras), BL-algebras, NM-algebras and BCI-algebras\cite{Janowitz,Monteiro1,Cignoli,Lattanzi,Nola,Nola1,Daniel,Xin,Xin1}. In the above-mentioned algebras, both MV-algebras and NM-algebras satisfy De Morgan and double negation laws, in the definition of the corresponding quantifiers, it is possible to use only one of the existential and universal quantifiers as primitive, the other being definable as the dual of the one defined. However, definitions of quantifiers on distributive lattice, BL-algebras and MTL-algebras are not mutually interdefinable. So, which quantifier (universal or existential) as the initial one introduce in MTL-algebras is very important. In particular, in order to study monadic ideal and related theory, the existential quantifier instead of the universal quantifier were used to define monadic Boolean algebra. But for MTL-algebra, we would like to study filter rather than ideal, using the universal quantifier as the original one is more natural and convenient. Thus, we introduce and study universal quantifier on MTL-algebra in this paper.

In this paper, we give an algebraic study of the universal quantifiers in monoidal t-norm based logic. More precisely, we first introduce the concept of MTL-algebras with universal quantifiers and show that is a natural generalization of MV-algebras with universal quantifiers. Then we characterize the classes of MTL-algebras with universal quantifiers and give some representations of them. Finally, we establish modal monoidal t-norm based propositional logics and prove that is completeness with respect to the variety of MTL-algebras with universal quantifiers, and then obtain a necessary and sufficient condition for the modal monoidal t-norm based logic to be semilinear.

The paper is organized as follows: In Section 2, we review some results on MTL-algebras and their corresponding logics. In Section 3, we introduce the concept of MTL-algebras with universal quantifier and study some related properties of them. In Section 4, we give some characterizations of subclasses of MTL-algebras with universal quantifiers and give some representations of them. In Section 5, we introduce the modal monoidal t-norm based propositional logic and prove that is completeness with respect to the variety of MTL-algebras with quantifiers.

\section{Preliminaries}

In this section, we summarize some definitions and results about MTL-algebras and their corresponding logics, which will be used in the following sections.

\begin{definition}\emph{\cite{Esteva} (Monoidal t-norm based logic) $\mathbf{MTL}$ is the
logic given by the Hilbert-style calculus with $\mathbf{MP}$ as its only inference rule and the following axioms:}
\begin{enumerate}[(1)]
  \item \emph{$(\alpha\Rightarrow \beta)\Rightarrow((\beta\Rightarrow\gamma)\Rightarrow(\alpha\Rightarrow\gamma))$,}
  \item \emph{$(\alpha\&\beta)\Rightarrow \alpha$,}
  \item \emph{$(\alpha\&\beta)\Rightarrow(\beta\&\alpha)$,}
  \item \emph{$(\alpha\sqcap \beta)\Rightarrow \alpha$,}
  \item \emph{$(\alpha\sqcap \beta)\Rightarrow (\beta\sqcap \alpha)$,}
  \item \emph{$(\alpha\&(\alpha\Rightarrow \beta)\Rightarrow(\alpha\sqcap\beta)$,}
  \item \emph{$(\alpha\Rightarrow(\beta\Rightarrow\gamma))\Rightarrow((\alpha\&\beta)\Rightarrow\gamma)$,}
  \item \emph{$((\alpha\&\beta)\Rightarrow\gamma)\Rightarrow(\alpha\Rightarrow(\beta\Rightarrow \gamma))$,}
  \item \emph{$((\alpha\Rightarrow\beta)\Rightarrow\gamma)\Rightarrow(((\beta\Rightarrow \alpha)\Rightarrow \gamma)\Rightarrow \gamma)$,}
  \item \emph{ \={0}$\Rightarrow\alpha $,}
\end{enumerate}
\end{definition}

Other connectives in $\mathbf{MTL}$ can be defined from $\&,\Rightarrow,\sqcap$ as follows:
\begin{center}
$\overline{1}=\alpha\Rightarrow\alpha, \neg \alpha=\alpha\Rightarrow \overline{0}$,

$\alpha\sqcup \beta=((\alpha\Rightarrow \beta)\Rightarrow \beta)\sqcap (\beta\Rightarrow \alpha)\Rightarrow \alpha)$,

$\alpha\Leftrightarrow \beta=(\alpha\Rightarrow \beta)\&(\beta\Rightarrow \alpha)$.
\end{center}

Involutivity  monoidal t-norm based logic $\mathbf{IMTL}$ is obtained from $\mathbf{MTL}$ by adding the following axiom:
\begin{center}
  (INV)  $\neg\neg\alpha\Rightarrow\alpha$.
\end{center}

Nilpotent Minimum Logic $\mathbf{NM}$ is obtained from $\mathbf{IMTL}$ by adding the following axiom:
\begin{center}
(WNM)   $\neg(\alpha\&\beta)\sqcup ((\alpha\sqcap \beta)\Rightarrow (\alpha\&\beta))$.
\end{center}

{\L}ukasiewicz logic $\mathbf{MV}$can be obtained by adding to $\mathbf{MTL}$ the following axiom:
\begin{center}
  (MV)  $((\alpha\Rightarrow \beta)\Rightarrow \beta)\Rightarrow ((\beta\Rightarrow \alpha)\Rightarrow \alpha)$.
\end{center}

Classical propositional logic $\mathbf{CL}$ can be obtained by adding to $\mathbf{MTL}$ the excluded middle axiom:

\begin{center}
  (EM)  $\alpha\sqcup \neg \alpha$.
\end{center}

As pointed out in \cite{Noguera6}, $\mathbf{MTL}$ and its axiomatic extensions are all algebraizable in the sense of Blok and Pigozzi \cite{Blok}, and their corresponding semantics forms a variety of algebras. The variety $\mathbb{MTL}$ of MTL-algebras coincides with the variety of prelinear residuated lattices. The algebras of this variety are subdirect products of the linearly ordered members of the class. This also gives completeness of $\mathbf{MTL}$ with respect to the class of linearly ordered MTL-algebras \cite{Esteva}.

\begin{definition}\emph{\cite{Esteva} An algebraic structure $(L,\wedge,\vee,\odot,\rightarrow,0,1)$ of type $(2,2,2,\\2,0,0)$ is called an \emph{MTL-algebra} if it satisfies the following conditions:}
\begin{enumerate}[(1)]
  \item \emph{$(L,\wedge,\vee,0,1)$ is a bounded lattice,}
  \item \emph{$(L,\odot,1)$ is a commutative monoid,}
  \item \emph{$x\odot y\leq z$ if and only if $x\leq y\rightarrow z$,}
  \item \emph{$(x\rightarrow y)\vee(y\rightarrow x)=1$, for any $x,y,z\in L$.}
\end{enumerate}
\end{definition}

An MTL-algebra is called an \emph{IMTL-algebra} if it satisfies the involutivity equation:
\begin{center} (INV) $\neg\neg x=x$.
\end{center}

An IMTL-algebra is called an \emph{NM-algebra} if it satisfies the additional equation:
\begin{center} (WNM) $(x\odot y\rightarrow 0)\vee(x\wedge y\rightarrow x\odot y)=1$.
\end{center}

An MTL-algebra is called an \emph{MV-algebra} if it satisfies the following equation:
\begin{center} (MV) $(x\rightarrow y)\rightarrow y=(y\rightarrow x)\rightarrow x$.
\end{center}

An MTL-algebra is called a \emph{Boolean algebra} if it satisfies the excluded middle equation:
\begin{center} (EM) $x\vee \neg x=1$.
\end{center}

 Let $L$ be an MTL-algebra. A nonempty subset $F$ of $L$ is called a \emph{filter} if  it satisfies: (1) $1\in F$; (2) $x\in F$ and $x\rightarrow y\in F$ imply $y\in F$.  A filter $F$ of $L$ is called a \emph{proper filter} if $F\neq L$. Unless otherwise explicitly stated, filters are assumed to be proper. A proper filter $F$ of $L$ is called a \emph{maximal filter} if it is not contained in any proper filter of $L$. A proper filter $F$ of $L$ is called a \emph{prime filter} if for each $x,y\in L$ and $x\vee y\in F$, imply $x\in F$ or $y\in F$. A prime filter $F$ is said to be \emph{minimal} if it is a minimal element in the set of prime filters of $L$ ordered by inclusion. Moreover, we denote by $\langle X\rangle$ is the filter generated by a nonempty subset $X$ of $L$. Clearly, we have $\langle X\rangle=\{x\in L|x\geq x_1\odot x_2\odot\cdots\odot x_n$, for some $n\in N$ and some $x_i\in X\}$. In particular, the principal filter generated by an element $x\in L$ is $\langle x\rangle=\{y\in L|y\geq x^n \}$. If $F$ is a filter and $x\in L$, then $\langle F\cup x\rangle=\{y\in L|y \geq f\odot x^n$, for some $f\in F\}$. We denote by $F[L]$ be the set of all filers of $L$ and obtain that  $(F[L],\subseteq )$ forms a complete lattice \cite{Esteva,Wang,Wang1}.\medskip

 \begin{theorem}\emph{\cite{Zhang2}} \emph{ Let $L$ be an MTL-algebra and $P$ be a proper filter of $L$. Then the following statements are equivalent:
\begin{enumerate}[(1)]
  \item $P$ is a minimal prime,
  \item $P=\cup\{a^\bot|a\notin P\}$, where $a^\bot=\{x\in L|a\vee x=1\}$.
\end{enumerate}}
\end{theorem}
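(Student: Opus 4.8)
The plan is to prove the two implications separately, after two preliminary observations. First I would record that each $a^\bot=\{x\in L\mid a\vee x=1\}$ is itself a filter: it contains $1$, is upward closed, and is closed under $\odot$, because in any residuated lattice $\odot$ distributes over $\vee$ and $a\odot z\le a$, so $(a\vee x)\odot(a\vee y)\le a\vee(x\odot y)$; hence $a\vee x=a\vee y=1$ forces $a\vee(x\odot y)=1$. Second, I would isolate the crucial prelinearity consequence: if $s\vee t=1$ then $s^{n}\vee t^{m}=1$ for all $n,m\ge1$. This holds in every linearly ordered MTL-algebra (there $s\vee t=1$ forces $s=1$ or $t=1$), and since each MTL-algebra is a subdirect product of chains it holds coordinatewise, hence everywhere.

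For $(2)\Rightarrow(1)$ I would first deduce that $P$ is prime directly from $P=\bigcup_{a\notin P}a^\bot$. Given $u\vee v\in P$, prelinearity gives $(u\to v)\vee(v\to u)=1$. If $u\to v\in P$ then $(u\vee v)\odot(u\to v)\le v$ forces $v\in P$; if instead $u\to v\notin P$ then $v\to u\in(u\to v)^\bot\subseteq\bigcup_{a\notin P}a^\bot=P$, and $(u\vee v)\odot(v\to u)\le u$ forces $u\in P$. Minimality is then immediate from the easy inclusion: for any prime $Q\subseteq P$ and any $x$ with $a\vee x=1$, $a\notin P$, we have $a\notin Q$, so primeness of $Q$ gives $x\in Q$; thus $P=\bigcup_{a\notin P}a^\bot\subseteq Q\subseteq P$ and $Q=P$.

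For $(1)\Rightarrow(2)$ the inclusion $\bigcup_{a\notin P}a^\bot\subseteq P$ is again the easy primeness argument. The substantive direction is $P\subseteq\bigcup_{a\notin P}a^\bot$, i.e.\ every $x\in P$ admits some $a\notin P$ with $a\vee x=1$. Arguing by contradiction, suppose $x\in P$ yet $a\vee x\ne1$ for all $a\notin P$. The engine will be a prime-filter separation theorem, which I would prove first: a filter maximal among those disjoint from a given $\vee$-closed set $C$ with $1\notin C$ is prime. Indeed, if such a maximal $Q$ had $u\to v,\,v\to u\notin Q$, maximality yields $c_{1}\ge q_{1}\odot(u\to v)^{n}$ and $c_{2}\ge q_{2}\odot(v\to u)^{m}$ with $c_{i}\in C$, $q_{i}\in Q$; putting $q=q_{1}\odot q_{2}$ and using $(u\to v)^{n}\vee(v\to u)^{m}=1$ gives $c_{1}\vee c_{2}\ge q\odot\big((u\to v)^{n}\vee(v\to u)^{m}\big)=q\in Q$, so $c_{1}\vee c_{2}\in Q\cap C$, a contradiction.

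With this tool in hand I would set $C$ to be the $\vee$-closure of $(L\setminus P)\cup\{x\}$; since $L\setminus P$ is $\vee$-closed (because $P$ is prime) one gets $C=(L\setminus P)\cup\{m\vee x\mid m\notin P\}$, and the contradiction hypothesis is precisely the assertion that $1\notin C$. Hence $\{1\}$ is a filter disjoint from the $\vee$-closed set $C$, and the separation theorem produces a prime filter $Q\supseteq\{1\}$ with $Q\cap C=\emptyset$. Then $L\setminus P\subseteq C$ gives $Q\subseteq P$, while $x\in C$ gives $x\notin Q$; as $x\in P$, this exhibits a prime filter $Q\subsetneq P$, contradicting minimality. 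The main obstacle is exactly this construction of a genuinely smaller prime, and its success rests entirely on the prelinearity identity $s\vee t=1\Rightarrow s^{n}\vee t^{m}=1$, which is what makes ``maximal avoiding a $\vee$-closed set'' coincide with ``prime''.
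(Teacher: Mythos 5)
Your proof is correct and complete. The paper itself offers no proof of this statement (it is quoted from \cite{Zhang2} as a known result), but your route is the standard one and, as far as the cited literature goes, essentially the intended one: the easy inclusion $\bigcup_{a\notin P}a^\bot\subseteq P$ from primeness, and the hard inclusion via the separation lemma that a filter maximal among those disjoint from a $\vee$-closed set avoiding $1$ is prime, which in turn hinges exactly on the prelinearity consequence $s\vee t=1\Rightarrow s^{n}\vee t^{m}=1$. One cosmetic remark: your displayed description of $C$ omits $x$ itself, but since $P$ is proper we have $0\notin P$ and $x=0\vee x$, so $x$ does lie in your $C$ and the final contradiction with minimality goes through as written.
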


\begin{definition}\emph{\cite{Zhang2} An MTL-algebra $L$ is called \emph{representable} if $L$ is isomorphic to a subdirect product
of linearly ordered MTL-algebras.}
\end{definition}

\begin{theorem}\emph{\cite{Zhang2} Let $L$ be an MTL-algebra. Then the  following statements are equivalent:}
\begin{enumerate}[(1)]
  \item \emph{$L$ is a representable MTL-algebra,}
  \item \emph{there exists a set $\mathcal{P}$ of prime filters such that $\cap \mathcal{P}=\{1\}$.}
\end{enumerate}
\end{theorem}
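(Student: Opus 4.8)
The plan is to exploit the standard bijective correspondence between filters and congruences of an MTL-algebra, under which a filter $F$ is matched with the congruence $\theta_F$ given by $x\,\theta_F\,y$ iff $(x\rightarrow y)\odot(y\rightarrow x)\in F$, together with Birkhoff's subdirect representation theorem. Before treating either implication I would isolate two auxiliary facts. Fact (i): for any filter $F$, the quotient $L/F$ is linearly ordered precisely when $F$ is prime. This follows because prelinearity gives $(x\rightarrow y)\vee(y\rightarrow x)=1\in F$, so if $F$ is prime then $x\rightarrow y\in F$ or $y\rightarrow x\in F$, which is exactly the statement that $x/F\le y/F$ or $y/F\le x/F$ in $L/F$; the converse is routine. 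Fact (ii): the canonical map $L\to L/F$ sends $x$ to its class $x/F$, and $x/F=y/F$ holds iff $(x\rightarrow y)\odot(y\rightarrow x)\in F$; moreover $(x\rightarrow y)\odot(y\rightarrow x)=1$ forces $x\rightarrow y=1$ and $y\rightarrow x=1$, hence $x=y$.

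For the implication $(2)\Rightarrow(1)$, I would assume $\mathcal{P}$ is a set of prime filters with $\cap\mathcal{P}=\{1\}$ and consider the homomorphism
\[
\varphi\colon L\longrightarrow \prod_{P\in\mathcal{P}} L/P,\qquad \varphi(x)=(x/P)_{P\in\mathcal{P}}.
\]
Each component is the canonical surjection onto $L/P$, and by Fact (i) every $L/P$ is linearly ordered. The hypothesis enters only through injectivity: if $\varphi(x)=\varphi(y)$ then $x/P=y/P$ for every $P$, so by Fact (ii) the element $(x\rightarrow y)\odot(y\rightarrow x)$ lies in every $P\in\mathcal{P}$, hence in $\cap\mathcal{P}=\{1\}$, whence $(x\rightarrow y)\odot(y\rightarrow x)=1$ and $x=y$. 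Thus $\varphi$ is a subdirect embedding into a product of chains, so $L$ is representable.

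For the converse $(1)\Rightarrow(2)$, I would suppose $L$ is a subdirect product of linearly ordered MTL-algebras $\{L_i\}_{i\in I}$ with surjective projections $\pi_i\colon L\to L_i$, and set $P_i=\{x\in L:\pi_i(x)=1\}$. Each $P_i$ is a filter (the kernel of a homomorphism), and since $L/P_i\cong L_i$ is linearly ordered, Fact (i) makes $P_i$ prime. The defining property of a subdirect product is that the induced map $L\to\prod_i L_i$ is injective, which unravels via Fact (ii) into the statement $\cap_{i}P_i=\{1\}$. Hence $\mathcal{P}=\{P_i:i\in I\}$ is the required family.

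The only point demanding care is the faithful translation between the algebraic and order-theoretic languages—namely establishing Facts (i) and (ii) and verifying that injectivity of the subdirect map corresponds exactly to $\cap\mathcal{P}=\{1\}$. Once these are secured the argument is purely formal, so the main obstacle is the bookkeeping of the filter–congruence correspondence rather than any genuinely deep step.
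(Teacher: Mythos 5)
Your proof is correct and follows the canonical route (filter--congruence correspondence, primality of $F$ $\Leftrightarrow$ linearity of $L/F$ via prelinearity, and Birkhoff-style subdirect embedding with injectivity equivalent to $\cap\mathcal{P}=\{1\}$); note that the paper itself states this theorem only as a cited result from the reference [Zhang2] and gives no proof, so your argument supplies exactly the standard missing details. The only cosmetic caveat is to make sure the filters $P_i=\pi_i^{-1}(1)$ in the converse direction are proper, which holds as long as the linearly ordered factors are nontrivial.
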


\section{MTL-algebras with universal quantifiers}

In this section, we introduce MTL-algebras with universal quantifiers and investigate some related of their properties. Then we show that MTL-algebras with universal quantifiers are natural generalize MV-algebras with universal quantifiers.

\begin{definition}\emph{An MTL-algebra with a universal quantifier (\emph{UMTL-algebra} for short) is a structure $(L,\forall)=(L,\wedge,\vee,\odot,\rightarrow,0,1,\forall)$, where $(L,\wedge,\vee,\odot,\rightarrow,0,1)$ is an MTL-algebra and $\forall:L\rightarrow L$ is an unary operation on $L$ satisfying, for any $x,y\in L$,}
\begin{enumerate}[(U1)]
  \item $\forall(x)\leq x$,
  \item $\forall(x\rightarrow \forall y\rightarrow \forall y)=(\forall x\rightarrow \forall y)\rightarrow \forall y$,
  \item $\forall(\forall x\rightarrow y)=\forall x\rightarrow\forall y$.
\end{enumerate}
\end{definition}

Clearly the class of UMTL-algebras constitutes a variety which will be henceforth denoted by $\mathbb{UMTL}$.

\begin{example}
 \emph{Let $L=\{0,a,b,c,d,1\}$ be a set such that $0\leq a,b; a\leq c,d; b\leq c; c,d\leq 1$. Defining operations $\odot$ and $\rightarrow$ as follows:}
 \begin{center}
 \begin{tabular}{c|c c c c c c}
   $\odot$ & $0$ & $a$ & $b$ & $c$ & $d$ & $1$\\
   \hline
   $0$ & $0$ & $0$ & $0$ & $0$ & $0$ & $0$ \\
   $a$ & $0$ & $0$ & $0$ & $0$ & $0$ & $a$ \\
   $b$ & $0$ & $0$ & $b$ & $b$ & $0$ & $b$ \\
   $c$ & $0$ & $0$ & $b$ & $b$ & $a$ & $c$\\
   $d$ & $0$ & $0$ & $0$ & $0$ & $d$ & $d
   $ \\
   $1$ & $0$ & $a$ & $b$ & $c$ & $d$ & $1$
 \end{tabular} {\qquad}
 \begin{tabular}{c|c c c c c c}
    $\rightarrow$ & $0$ & $a$ & $b$ & $c$ & $d$ & $1$\\
    \hline
    $0$ & $1$ & $1$ & $1$ & $1$ & $1$ & $1$ \\
    $a$ & $c$ & $1$ & $c$ & $1$ & $1$ & $1$ \\
    $b$ & $d$ & $d$ & $1$ & $1$ & $d$ & $1$ \\
    $c$ & $a$ & $d$ & $c$ & $1$ & $d$ & $1$\\
    $d$ & $b$ & $c$ & $b$ & $c$ & $1$ & $1$ \\
    $1$ & $0$ & $
   a$ & $b$ & $c$ & $d$ & $1$
  \end{tabular}
  \end{center}
\emph{Then  $(L,\wedge,\vee,\odot,\rightarrow,0,1)$ is an MTL-algebra. Now, we define $\forall$ as follow:
\begin{center}
                  $\forall x=
                 \begin{cases} 1, & x=1\\
                 0, & x\neq 1
                  \end{cases}$,
                   \end{center}
It is easily verified that $(L,\forall)$ is a UMTL-algebra.}
\end{example}

\begin{example}\emph{ Let $L$ be a standard NM-algebra on $[0,1]$ and $L_n\subseteq L$ be a standard $n$-valued NM-algebra for some $n\geq 2$ (its elements are $0,\frac{1}{n-1},\cdots,\frac{n-2}{n-1},1)$. For any $x\in L$, we define
\begin{center}
$\forall x=\max\{y\in L_n|y\leq x\}$.
\end{center}
 Then $(L,\forall)$ is a UMTL-algebra.}
\end{example}

\begin{proposition}\emph{ In any UMTL-algebra $(L,\forall)$ the following properties hold:}
\begin{enumerate}[(1)]
  \item \emph{$\forall 0=0$,
  \item $\forall 1=1$,
  \item $\forall\forall x=\forall x$,
  \item $x\leq y$ implies $\forall x\leq \forall y$,
  \item $\forall(x\rightarrow y)\leq \forall x\rightarrow\forall y$, especially, $\forall \neg x\leq \neg \forall x$,
  \item $\forall x\leq y$ if and only if $\forall x\leq \forall y$,
  \item $\forall(\forall x\rightarrow \forall y)=\forall x\rightarrow \forall y$,
  \item $\forall\neg \forall x=\neg \forall x$,
  \item $\forall(x\wedge y)=\forall x\wedge \forall y$,
  \item $\forall(\forall x\vee \forall y)=\forall x\vee \forall y$,
  \item $\forall(x\odot y)\geq\forall x\odot\forall y$,
  \item $\forall(\forall x\odot \forall y)=\forall x\odot \forall y$,
  \item $\forall L=L_{\forall}$, where $L_{\forall}=\{x\in L|\forall x=x\}$,
  \item $\forall L$ is a subalgebra of $L$.}
\end{enumerate}
\end{proposition}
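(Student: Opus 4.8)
The plan is to prove the fourteen items essentially in the order listed, bootstrapping each from the axioms (U1)--(U3) together with the items already established; throughout, the residuated adjunction ($u\odot v\le w$ precisely when $u\le v\rightarrow w$) and prelinearity $(u\rightarrow v)\vee(v\rightarrow u)=1$ are the workhorses. The easy ``lower'' facts come first: (1) is immediate from (U1) since $0$ is least; (2) follows by putting $x=0$ in (U3), because $\forall 0=0$ turns both sides into $\forall 1=1$; and (3) comes from (U3) with $y:=\forall x$, which gives $\forall 1=\forall x\rightarrow\forall\forall x$, whence $\forall x\le\forall\forall x$, the reverse being (U1). Monotonicity (4) is the first genuinely useful step: if $x\le y$ then $\forall x\le x\le y$, so $\forall x\rightarrow y=1$ and hence $\forall x\rightarrow\forall y=\forall(\forall x\rightarrow y)=\forall 1=1$ by (U3) and (2). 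I would record at once the consequence used repeatedly afterwards, namely that a fixed point $a=\forall a$ with $a\le z$ satisfies $a\le\forall z$ (apply (4)); this says exactly that $\forall z$ is the greatest fixed point below $z$.

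With (4) in hand, items (5)--(8) are short. For (5) use $x\rightarrow y\le\forall x\rightarrow y$ (from $\forall x\le x$), apply (4), then (U3); the special case is $y:=0$ with $\forall 0=0$. Item (6) is the adjunction-flavoured equivalence: the forward direction applies (4) and (3), while the reverse is just (U1). Items (7) and (8) are direct substitutions into (U3): $\forall(\forall x\rightarrow\forall y)=\forall x\rightarrow\forall\forall y=\forall x\rightarrow\forall y$ by (3), and (8) is the case $\forall y:=0$.

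The substantive work is in the ``upper'' identities (9)--(12), i.e. showing that the set $L_{\forall}$ of fixed points is closed under $\odot$, $\wedge$ and $\vee$. I would treat $\odot$ first: for (12), write $z=\forall x\odot\forall y$; from $\forall x\odot\forall y\le z$ the adjunction gives $\forall x\le\forall y\rightarrow z$, applying $\forall$ and using (3) and (U3) yields $\forall x\le\forall y\rightarrow\forall z$, and re-residuating gives $z\le\forall z$, which with (U1) is (12); then (11) is immediate since $\forall x\odot\forall y$ is a fixed point lying below $x\odot y$, so the recorded consequence of (4) applies. The crux---and the step I expect to be the main obstacle---is closure under $\wedge$, because an abstract deflationary idempotent monotone operator need not preserve meets, and in a general MTL-algebra there is no divisibility identity $a\wedge b=a\odot(a\rightarrow b)$ to fall back on. Here prelinearity must enter. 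For fixed $a,b$ put $c=\forall(a\wedge b)\le a\wedge b$; using $a\rightarrow(a\wedge b)=a\rightarrow b$ together with (U3) one computes $a\rightarrow c=\forall(a\rightarrow b)=a\rightarrow b$, and symmetrically $b\rightarrow c=b\rightarrow a$. Since $\rightarrow$ is antitone in its first argument, $(a\wedge b)\rightarrow c\ge(a\rightarrow c)\vee(b\rightarrow c)=(a\rightarrow b)\vee(b\rightarrow a)=1$, so $a\wedge b\le c$ and $a\wedge b$ is a fixed point. Monotonicity then upgrades this to (9).

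Finally (10) follows from the meet-closure just proved: by (U2), with $x$ replaced by $\forall x$ and using (3), the terms $(\forall x\rightarrow\forall y)\rightarrow\forall y$ and $(\forall y\rightarrow\forall x)\rightarrow\forall x$ are fixed points, and $\forall x\vee\forall y$ is exactly their meet, hence fixed. Item (13) is a restatement of (3): $\forall L\subseteq L_{\forall}$ because $\forall\forall x=\forall x$, and $L_{\forall}\subseteq\forall L$ is trivial. Item (14) is then bookkeeping: $L_{\forall}$ contains $0,1$ by (1),(2) and is closed under $\rightarrow,\odot,\wedge,\vee$ by (7),(12),(9),(10), so $\forall L=L_{\forall}$ is a subalgebra. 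I expect essentially all of the difficulty to be concentrated in the meet-closure argument, where prelinearity is indispensable; everything else is routine once (4) and its ``greatest fixed point below'' reading are available.
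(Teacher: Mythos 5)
Your proposal is correct and follows essentially the same route as the paper: the axioms (U1)--(U3) plus monotonicity handle items (1)--(8), prelinearity (via the identity relating $(a\wedge b)\rightarrow c$ to $(a\rightarrow c)\vee(b\rightarrow c)$) drives the meet-closure in (9), (U2) together with the definable join and meet-closure gives (10), and residuation gives (11)--(12). The only differences are cosmetic reorderings — you derive (2) and (3) from (U3) rather than from (U1)/(U2), prove (12) before (11), and package the recurring step as ``$\forall z$ is the greatest fixed point below $z$'' — but the underlying computations coincide with (and in places are cleaner than) the paper's.
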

\begin{proof}\begin{enumerate}[(1)]
               \item Applying (U1), we have $\forall 0 \leq 0$. Thus, $\forall 0=0$.
               \item From (U1) and (U3), we have
               \begin{center}
               $\forall 1=\forall(\forall x\rightarrow x)=\forall x\rightarrow \forall x=1$.
               \end{center}
               \item From (U2) and (1), we have
               \begin{center}
                $\forall\forall x=\forall(0\rightarrow\forall x\rightarrow \forall x)=(\forall 0\rightarrow \forall x)\rightarrow \forall x=1\rightarrow \forall x=\forall x$.
                \end{center}
               \item If $x\leq y$, then $x\rightarrow y=1$. It follows from (U3) and (2) that
               \begin{center}
               $1=\forall (1)=\forall(\forall x\rightarrow y)=\forall x\rightarrow \forall y$.
               \end{center}
               which implies that $\forall x\leq \forall y$.
               \item From (U1), we get $x\rightarrow y\leq \forall x\rightarrow y$, and hence by (Q3) and (4), we have
               \begin{center}
               $\forall(x\rightarrow y)\leq \forall(\forall x\rightarrow y)=\forall x\rightarrow \forall y$.
               \end{center}
               \item Clearly.
               \item From (U3) and (3), we deduce that
               \begin{center}
               $\forall(\forall x\rightarrow \forall y)=\forall x\rightarrow \forall\forall y=\forall x\rightarrow \forall y$.
               \end{center}
               \item From (U3) and (1), we have
               \begin{center}
               $\forall\neg \forall x=\forall(\forall x\rightarrow 0)=\forall x\rightarrow \forall 0=\forall x\rightarrow 0=\neg\forall x$.
               \end{center}
               \item From (U2), we have
               \begin{eqnarray*}
               (\forall x\wedge \forall y)\rightarrow \forall(\forall x\wedge \forall y)&=& (\forall x\rightarrow \forall(\forall x\wedge \forall y))\vee (\forall y\rightarrow \forall(\forall x\wedge \forall y))\\ &=& \forall(\forall x\rightarrow (\forall x\wedge \forall y)\vee \forall(\forall y\rightarrow (\forall x\wedge \forall y)\\ &=& \forall(\forall x\rightarrow \forall y)\vee \forall(\forall y\rightarrow \forall x)\\ &=&(\forall x\rightarrow \forall y)\vee (\forall y\rightarrow \forall x)\\ &=&1,
               \end{eqnarray*}
               which implies $(\forall x\wedge \forall y)\leq \forall(\forall x\wedge \forall y)$. Also, further by (U1) and (4), we obtain
               \begin{center}
               $\forall x\wedge \forall y\leq \forall(\forall x\wedge \forall y)\leq \forall(x\wedge y)\leq \forall x\wedge \forall y$.
                \end{center}
               \item From $x\odot y\leq x\odot y$, we get $y\leq x\rightarrow(x\odot y)$. Applying (4),(5), we get $\forall y\leq \forall x\rightarrow \forall(x\odot y)$. Thus, $\forall(x)\odot \forall(y)\leq \forall(x\odot y)$ by Definition 2.1(3).
               \item It follows from (7) and (8).
               \item Let $y\in \forall L$. Then there exists $x\in L$ such that $y=\forall x$. Hence $\forall y=\forall\forall x=\forall x=y$. It follows that $y\in L_{\forall}$. Conversely, if $y\in L_{\forall}$, we have $y\in\forall L$. Therefore, $\forall L=L_{\forall}$.
               \item  (7) and (12) imply that $\rightarrow$ and $\odot$ are preserved, respectively. (1) and (2) imply that $0,1\in \forall L$. (3) implies that $\forall$ is preserved. Thus, $\forall L$ is a subalgebra of $L$.
             \end{enumerate}
\end{proof}

In what follows, we focus our study on two main subvarieties of $\mathbb{UMTL}$: MV-algebras with universal quantifiers and monadic Boolean algebras. \medskip

 An algebra $(A,\wedge,\vee,\rightarrow,\odot,\forall,0,1)$ is said to be MV-algebra with a universal quantifier (UMV-algebra for short) if $(A,\wedge,\vee,\rightarrow,\odot,0,1)$ is an MV-algebra and in addition $\forall$ satisfies the following identities:

\begin{enumerate}[$(\forall 1)$]
  \item $\forall 1=1$,
  \item $\forall x\leq x$,
  \item $\forall(x\vee \forall y)=\forall x\vee \forall y$,
  \item $\forall(x\rightarrow y)\rightarrow (\forall x\rightarrow \forall y)=1$,
  \item $\forall(\forall x\rightarrow \forall y)=\forall x\rightarrow \forall y$.
\end{enumerate}
The variety of UMV-algebras is denoted by $\mathbb{UMV}$.

\begin{theorem} \emph{The subvariety of $\mathbb{UMTL}$ determined by the equation
\begin{center}
(MV) $(x\rightarrow y)\rightarrow y=(y\rightarrow x)\rightarrow x$
 \end{center}
 is term-equivalent
to the variety $\mathbb{UMV}$.}
\end{theorem}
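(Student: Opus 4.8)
The plan is as follows. Both $\mathbb{UMV}$ and the subvariety of $\mathbb{UMTL}$ cut out by (MV) are expansions of the same MV-reduct by a single unary operation $\forall$, over one common similarity type $(\wedge,\vee,\odot,\rightarrow,0,1,\forall)$. Hence proving term-equivalence amounts to showing that, assuming the equation (MV), the quantifier axioms (U1)--(U3) and $(\forall 1)$--$(\forall 5)$ are interderivable. I will establish the two inclusions separately, repeatedly using the MV identity $a\vee b=(a\rightarrow b)\rightarrow b$ to pass between the ``join'' formulation of the axioms and their ``iterated implication'' formulation.

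For the direction from the $(\mathrm{MV})$-subvariety of $\mathbb{UMTL}$ to $\mathbb{UMV}$, most of the quantifier axioms are read off directly from Proposition 3.4: axiom $(\forall 1)$ is part (2), $(\forall 2)$ is exactly (U1), $(\forall 4)$ is the inequality in part (5) rewritten as an equation using $a\leq b\Leftrightarrow a\rightarrow b=1$, and $(\forall 5)$ is part (7). The only axiom genuinely requiring (MV) is $(\forall 3)$, namely $\forall(x\vee\forall y)=\forall x\vee\forall y$. Here I substitute $x\vee\forall y=(x\rightarrow\forall y)\rightarrow\forall y$ and $\forall x\vee\forall y=(\forall x\rightarrow\forall y)\rightarrow\forall y$, which turns $(\forall 3)$ into precisely axiom (U2); thus $(\forall 3)$ follows from (U2) together with (MV).

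For the converse, from $\mathbb{UMV}$ to the $(\mathrm{MV})$-subvariety of $\mathbb{UMTL}$, axiom (U1) is $(\forall 2)$, and (U2) is recovered from $(\forall 3)$ by the same MV rewriting of joins as iterated implications. The substantive step, and the main obstacle, is deriving (U3), $\forall(\forall x\rightarrow y)=\forall x\rightarrow\forall y$, which is not among the $\mathbb{UMV}$ axioms. For this I first extract two auxiliary facts from $(\forall 1)$--$(\forall 5)$: idempotency $\forall\forall x=\forall x$, obtained by specialising $(\forall 5)$ at $x:=1$ and simplifying with $(\forall 1)$ and $1\rightarrow z=z$; and monotonicity of $\forall$, obtained from $(\forall 4)$ and $(\forall 1)$, since $a\leq b$ gives $\forall(a\rightarrow b)=\forall 1=1\leq\forall a\rightarrow\forall b$, whence $\forall a\leq\forall b$.

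With these in hand, (U3) splits into two inequalities. For ``$\leq$'', I instantiate $(\forall 4)$ at the pair $\forall x,y$ and apply idempotency: $\forall(\forall x\rightarrow y)\leq\forall\forall x\rightarrow\forall y=\forall x\rightarrow\forall y$. For ``$\geq$'', I use $(\forall 2)$ to get $\forall y\leq y$, hence $\forall x\rightarrow\forall y\leq\forall x\rightarrow y$ by monotonicity of $\rightarrow$ in its second argument; applying the monotone $\forall$ and then $(\forall 5)$ gives $\forall x\rightarrow\forall y=\forall(\forall x\rightarrow\forall y)\leq\forall(\forall x\rightarrow y)$. Combining the two inequalities yields (U3). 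I expect the derivation of (U3)---specifically the two lemmas and the careful bookkeeping of antitonicity/monotonicity of $\rightarrow$---to be the only place requiring real work; everything else reduces to a direct match against Proposition 3.4 or a mechanical application of the MV join identity.
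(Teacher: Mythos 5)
Your proposal is correct and follows essentially the same route as the paper: the forward direction is a direct match against Proposition 3.4 with $(\forall 3)$ identified with (U2) via the MV identity $a\vee b=(a\rightarrow b)\rightarrow b$, and the converse derives (U3) as the conjunction of the two inequalities $\forall(\forall x\rightarrow y)\leq\forall\forall x\rightarrow\forall y=\forall x\rightarrow\forall y$ and $\forall x\rightarrow\forall y=\forall(\forall x\rightarrow\forall y)\leq\forall(\forall x\rightarrow y)$. If anything, your version is slightly more careful than the paper's, since you explicitly re-derive idempotency and monotonicity of $\forall$ from the UMV axioms $(\forall 1)$--$(\forall 5)$ rather than citing Proposition 3.4 (which is stated for UMTL-algebras) inside the converse direction.
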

\begin{proof}
Let $(L,\wedge,\vee,\rightarrow,\odot,0,1,\forall)$ be a UMTL-algebra that satisfies the MV-condition. Now, we prove that is a  UMV-algebra. Indeed, $(\forall 1)$,$(\forall 2)$,$(\forall 3)$,$(\forall 4)$ and $(\forall 5)$ are precisely Proposition 3.4(2), (U1), (U2), Propositions 3.4(5) and (7), respectively.  Thus $(L,\wedge,\vee,\rightarrow,\odot,0,1,\forall)$ is a UMV-algebra.

 Conversely, let $(L,\wedge,\vee,\rightarrow,\odot,0,1,\forall)$ be a UMV-algebra. Now, we  prove that is a UMTL-algebra satisfies the MV-condition. Indeed, (U1) and (U2) are precisely $(\forall2)$ and $(\forall3)$, respectively. In order to show (U3), by Proposition 3.4(3) and (6), we have
 \begin{center}
 $\forall(\forall x\rightarrow y)\leq \forall\forall x\rightarrow \forall y=\forall x\rightarrow \forall y$.
  \end{center}
Moreover, by $(\forall 1)$, we have $\forall x\rightarrow \forall y\leq \forall x\rightarrow y$, further by Proposition 3.4(5), we have
 \begin{center}
 $\forall x\rightarrow \forall y\leq \forall(\forall x\rightarrow y)$.
  \end{center}
 So (U3) holds.

 Thus $(L,\wedge,\vee,\rightarrow,\odot,0,1,\forall)$ is a UMTL-algebra satisfies the MV-condition.

\end{proof}

 An algebra $(L,\wedge,\vee,\neg,\exists,0,1)$ is said to be monadic Boolean algebra if $(L,\wedge,\vee,\neg,0,1)$ is a Boolean algebra and in addition $\exists$ satisfies the following identities:

\begin{enumerate}[$(\exists 1)$]
  \item $\exists 0=0$,
  \item $x\leq \exists x$,
  \item $\exists(x\wedge \exists y)=\exists x\wedge\exists y$.
\end{enumerate}
The variety of monadic Boolean algebras is denoted by $\mathbb{MBA}$.

\begin{theorem} \emph{The subvariety of $\mathbb{UMTL}$ determined by the equation
\begin{center}
(EM) $x\vee \neg x=1$
\end{center}
 is term-equivalent
to the variety $\mathbb{MBA}$.}
\end{theorem}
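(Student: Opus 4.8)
The plan is to exhibit explicit term translations between the two signatures and to check that each sends an algebra of one variety to an algebra of the other, the translations being mutually inverse. First I would invoke the standard fact that an MTL-algebra satisfying (EM) is term-equivalent to a Boolean algebra: the excluded-middle law forces idempotency, so $\odot$ coincides with the lattice meet $\wedge$, the residuum becomes the Boolean implication $x\to y=\neg x\vee y$, and involutivity $\neg\neg x=x$ holds. Thus at the level of the quantifier-free reducts the term-equivalence is classical, and the whole problem reduces to matching the two quantifiers. The translation I would use is the De Morgan duality $\exists x:=\neg\forall\neg x$ and, conversely, $\forall x:=\neg\exists\neg x$; since $\neg\neg x=x$, these two maps are inverse to one another, which will immediately supply the mutual-inverseness of the translations at the end.

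For the forward direction ($\mathbb{UMTL}+$(EM) $\to\mathbb{MBA}$) I would set $\exists x=\neg\forall\neg x$ and verify $(\exists 1)$--$(\exists 3)$. Here $(\exists 1)$ is $\neg\forall 1=\neg 1=0$ by Proposition 3.4(2), and $(\exists 2)$ is just (U1) applied to $\neg x$ after taking complements. The real content is $(\exists 3)$: rewriting both sides through De Morgan and the identity $\neg\exists y=\forall\neg y$ reduces it to $\forall(u\vee\forall v)=\forall u\vee\forall v$. This additivity is not listed in Proposition 3.4 (which gives only the weaker (10), with $\forall$ on both disjuncts), so I would derive it: since $\neg\forall v$ is a $\forall$-fixed point by Proposition 3.4(8), I can write $u\vee\forall v=\neg\forall v\to u=\forall(\neg\forall v)\to u$ using Boolean implication, and then (U3) gives $\forall(\forall(\neg\forall v)\to u)=\neg\forall v\to\forall u=\forall u\vee\forall v$.

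For the backward direction ($\mathbb{MBA}\to\mathbb{UMTL}+$(EM)) the Boolean reduct is already an MTL-algebra satisfying (EM), so I would set $\forall x=\neg\exists\neg x$ and check (U1)--(U3). (U1) is $(\exists 2)$ dualized; (U2), after simplifying $(x\to\forall y)\to\forall y$ to $x\vee\forall y$ in a Boolean algebra, and (U3), after writing $\forall x\to y=\neg\forall x\vee y$, both come down once more to $\forall(x\vee\forall y)=\forall x\vee\forall y$. The key lemma on this side is that the complement of a closed element is closed, i.e.\ $\exists(\neg\exists z)=\neg\exists z$: applying $(\exists 3)$ to $\neg\exists z\wedge\exists z$ yields $\exists(\neg\exists z)\wedge\exists z=\exists 0=0$, whence $\exists(\neg\exists z)\le\neg\exists z$, and the reverse inequality is $(\exists 2)$. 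Translated back this says $\neg\forall x$ is a $\forall$-fixed point, which is exactly what makes the additivity identity go through as in the forward direction.

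I expect the main obstacle to be these two dual ``closure'' facts---that $\neg\forall v$ is a $\forall$-fixed point and, dually, that $\neg\exists z$ is an $\exists$-fixed point---together with the bookkeeping needed to push each quantifier axiom through De Morgan's laws. Once the identity $\forall(x\vee\forall y)=\forall x\vee\forall y$ is established on both sides, the remaining verifications are routine Boolean manipulations, and the mutual inverseness of the two translations follows at once from $\neg\neg x=x$.
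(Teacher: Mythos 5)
Your proposal is correct and rests on the same core idea as the paper's proof, namely the De Morgan translation $\exists x=\neg\forall\neg x$ and $\forall x=\neg\exists\neg x$, which are mutually inverse because (EM) forces involutivity. The difference is in how the verifications are organized. In the forward direction the paper simply asserts that $(\exists 1)$--$(\exists 3)$ are ``dual to'' Proposition 3.4(2), (U1) and (U2); in particular it reads the additivity law $\forall(x\vee\forall y)=\forall x\vee\forall y$ directly off (U2) after the Boolean simplification $(x\rightarrow a)\rightarrow a=x\vee a$, whereas you rederive it from (U3) together with the fixed-point fact $\forall\neg\forall x=\neg\forall x$ of Proposition 3.4(8) --- a valid alternative, though you could have shortened it by invoking (U2) as the paper does. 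In the converse direction the routes genuinely diverge: the paper passes through the variety $\mathbb{UMV}$ and appeals to the argument of Theorem 3.5, while you verify (U1)--(U3) directly in the monadic Boolean algebra, with the key lemma $\exists(\neg\exists z)=\neg\exists z$ (complements of closed elements are closed) doing the work that the UMV detour does in the paper. Your version is more self-contained and makes explicit the two closure facts that the paper leaves implicit; the paper's version is shorter but leans on Theorem 3.5 and on unproved ``duality'' claims. Both are sound.
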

\begin{proof}
Let $(L,\wedge,\vee,\rightarrow,\odot,0,1,\forall)$ be a UMTL-algebra that satisfies the EM-condition. Then $\exists x=\neg\forall\neg x$ for any $x\in L$. Now, we prove that is a  monadic Boolean algebra.  Indeed, $(\exists 1)$,$(\exists 2)$ and $(\exists 3)$ are precisely dual to Proposition 3.4(2), (U1) and (U2), respectively.  Thus $(L,\wedge,\vee,\rightarrow,\odot,0,1,\forall)$ is a monadic Boolean algebra.

 Conversely, let $(L,\wedge,\vee,\neg,\exists,0,1)$ be a monadic Boolean algebra. Then $(L,\wedge,\vee,\odot,\\\rightarrow,\forall,0,1)$ is a UMV-algebra, where $x\odot y=x\wedge y$, $x\rightarrow y=\neg(x\odot \neg y)$. The rest of proof is similar to that of Theorem 3.5. Thus $(L,\wedge,\vee,\rightarrow,\odot,0,1,\forall)$ is a UMTL-algebra satisfies the EM-condition.
\end{proof}
\begin{remark} \emph{Theorem 3.5 and 3.6 show that the MTL-algebras with universal quantifiers essentially natural generalize MV-algebras with universal quantifiers and monadic Boolean algebras. }
\end{remark}

\section{Representations of UMTL-algebras}

In this section, we characterize classes of UMTL-algebras, likeness representable, strong, simple and semisimple UMTL-algebras and give some representations of them.

\begin{definition}\emph{ A filter $F$ of $L$ is called a \emph{U-filter} of $(L,\forall)$ if it verifies
\begin{center}
 $x\in F$ implies $\forall x\in F$.
 \end{center}}
\end{definition}

Let $(L,\forall)$ be a UMTL-algebra. For any nonempty subset $X$ of $L$, we denote by $\langle X \rangle_\forall$ the U-filter of $(L,\forall)$ generated by $X$, that is, $\langle X \rangle_\forall$ is the smallest monadic filter of $(L,\forall)$ containing $X$. Indeed,
\begin{center}
$\langle X\rangle_\forall=\{x\in L|x\geq \forall x_1\odot\forall x_2\odot\cdots \odot\forall x_n, x_i\in X, n\geq 1\}$
 \end{center}
 and
 \begin{center}
 $\langle a\rangle_\forall=\{x\in L|x\geq (\forall a)^n, n\geq 1\}$.
 \end{center}
  Also,
If $F$ is a U-filter of $(L,\forall)$ and $x\notin F$, then we put
\begin{center}
$\langle F,x\rangle_\forall:=\langle F\cup \{x\}\rangle_\forall=\{y\in L|x\geq f\odot (\forall x)^n, f\in F\}=F\vee [\forall x)$.
\end{center}
The set of all U-filter of $(L,\forall)$, which will represent by $UF[L,\forall]$, is an algebraic closure system and is a subset of the set of all lattice filters of $(L,\wedge,\vee,0,1)$. Therefore, $UF[L,\forall]$
is an algebraic lattice in which meet is the set-intersection and the join is defined as follows: if $F_1,F_2\in UF[L,\forall]$, then
\begin{center}$F_1\vee F_2=\langle F_1\cup F_2\rangle_\forall=\{x|x\geq f_1\odot f_2, f_1\in F_1,f_2\in F_2\}$.
\end{center}

\begin{example} \emph{Let $(L,\forall)$ be the UMTL-algebra in Example 3.2. Then $\{1\}$, $\{1,d\}$, $\{1,b,c\}$ and $L$ are U-filters of
$(L,\forall)$.}
\end{example}

The following example indicates that the concept of U-filters in UMTL-algebras is not the same as that of filters in MTL-algebras.

\begin{example}\emph{ Let $L=[0,1]$ be a unit interval. Define $\wedge,\vee,\odot,\rightarrow$ as follows: $x\wedge y=\min\{x,y\}$, $x\vee y=\max\{x,y\}$,
\begin{center}
$x\odot y=
\begin{cases}
0, & x\leq \neg y \\
x\wedge y, & x> \neg y
\end{cases}$
{\qquad}
$x\rightarrow y=
\begin{cases}
1, & x\leq y \\
\neg x\vee y, & x> y.
\end{cases}$
\end{center}
 Now we define $\forall$ as follow: for any $x\in L$,
\begin{center}
$\forall x=
\begin{cases}
1, & x=1 \\
0, & x\neq 1.
\end{cases}$
\end{center}
Then $(L,\forall)$ is a UMTL-algebra. Moreover, it is easy to check that $(\frac{1}{2},1]$ is a filter of $L$ but not
a U-filter of $(L,\forall)$.}
\end{example}

 There exists a correspondence between the set of U-congruences and U-filters.

\begin{theorem} \emph{Let $(L,\forall)$ be a UMTL-algebra. Then the lattice of U-congruences is isomorphic to the set of U-filters. Indeed, let
\begin{center}
$f:UC[L,\forall]\rightarrow UF[L,\forall]$
 \end{center}
 be defined by: if $\equiv$ is a U-congruence, then $f(\equiv)$ is the U-filter $F_\equiv=\{a\in L|a\equiv 1\}$. Also, the function $f$ is an isomorphism such that if $F$ is a U-filter, then $f^{-1}(F)$ is a U-congruence $\equiv_F$ defined by $a\equiv_F b$ if and only if $a\rightarrow b,b\rightarrow a\in F$.  }
\end{theorem}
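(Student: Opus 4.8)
The plan is to exploit the classical correspondence between congruences and filters for MTL-algebras (residuated lattices) and then verify that it restricts correctly along the unary operation $\forall$; the bulk of the work for $\wedge,\vee,\odot,\rightarrow$ is standard and may be quoted, so I would concentrate on the two compatibility checks involving $\forall$. First I would verify that $f$ is well defined, i.e. that $F_\equiv=\{a\in L\mid a\equiv 1\}$ is a U-filter whenever $\equiv\in UC[L,\forall]$. That $F_\equiv$ is a filter is the usual MTL-algebra fact: $1\in F_\equiv$ since $1\equiv 1$; and if $a\equiv 1$ and $a\rightarrow b\equiv 1$, then $a\odot(a\rightarrow b)\equiv 1$ while $a\odot(a\rightarrow b)\leq b\leq 1$, so joining with $b$ gives $b\equiv 1$, i.e. $b\in F_\equiv$. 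The new point is closure under $\forall$: if $a\equiv 1$, then since $\equiv$ respects $\forall$ we get $\forall a\equiv\forall 1=1$ by Proposition 3.4(2), hence $\forall a\in F_\equiv$.

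Second, for a U-filter $F$ I would check that $a\equiv_F b\iff a\rightarrow b,\ b\rightarrow a\in F$ defines a U-congruence. That $\equiv_F$ is a congruence for $\wedge,\vee,\odot,\rightarrow$ is the standard filter-congruence construction in residuated lattices and can be cited. What must be added is compatibility with $\forall$: assume $a\equiv_F b$, so $a\rightarrow b\in F$ and $b\rightarrow a\in F$. Since $F$ is a U-filter, $\forall(a\rightarrow b)\in F$ and $\forall(b\rightarrow a)\in F$; by Proposition 3.4(5) we have $\forall(a\rightarrow b)\leq\forall a\rightarrow\forall b$ and $\forall(b\rightarrow a)\leq\forall b\rightarrow\forall a$, and filters are upward closed, so $\forall a\rightarrow\forall b\in F$ and $\forall b\rightarrow\forall a\in F$, that is $\forall a\equiv_F\forall b$. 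Hence $\equiv_F\in UC[L,\forall]$.

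Third, I would show the assignments $\equiv\mapsto F_\equiv$ and $F\mapsto\equiv_F$ are mutually inverse. For $F_{\equiv_F}=F$: here $a\in F_{\equiv_F}$ iff $a\equiv_F 1$ iff $a\rightarrow 1,\ 1\rightarrow a\in F$, and since $a\rightarrow 1=1\in F$ always while $1\rightarrow a=a$, this reduces to $a\in F$. For $\equiv_{F_\equiv}=\equiv$: one uses that in any MTL-algebra $a\equiv b$ is equivalent to $a\rightarrow b\equiv 1$ and $b\rightarrow a\equiv 1$ (the forward direction by compatibility with $\rightarrow$; the backward direction from $a\odot(a\rightarrow b)\leq a\wedge b\leq a$ together with $a\odot(a\rightarrow b)\equiv a$, giving $a\equiv a\wedge b$, and symmetrically $b\equiv a\wedge b$), which says exactly $a\rightarrow b,\ b\rightarrow a\in F_\equiv$; this step does not involve $\forall$.

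Finally, having a bijection, I would note that both $f$ and its inverse are order preserving for the inclusion orders on $UC[L,\forall]$ and $UF[L,\forall]$ (immediate from the defining formulas: $\equiv_1\subseteq\equiv_2$ clearly forces $F_{\equiv_1}\subseteq F_{\equiv_2}$, and dually), so $f$ is an order isomorphism between these two complete lattices and therefore a lattice isomorphism preserving meets and joins automatically. I expect the genuinely new content to be confined to the two $\forall$-compatibility arguments in the first two paragraphs; the filter-congruence machinery for the MTL-reduct and the order isomorphism at the end are routine once the bijection is in hand.
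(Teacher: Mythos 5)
Your proof is correct. The paper actually states this theorem with no proof at all, so there is nothing to compare against; your argument is the standard MTL filter--congruence correspondence augmented with exactly the two $\forall$-compatibility checks (closure of $F_\equiv$ under $\forall$ via $\forall 1=1$, and compatibility of $\equiv_F$ with $\forall$ via Proposition 3.4(5) and upward closure of U-filters), which is precisely the content one would expect the omitted proof to contain.
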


As a direct consequence, we have the following fact.

\begin{proposition}\emph{Let $(L,\forall)$ be a UMTL-algebra and $F$ be a U-filter of $(L,\forall)$. Then $(L/F,\forall_F)$ is a UMTL-algebra,
where
\begin{center}
$\forall_F([x])=[\forall x]$
\end{center}
for any $x\in L$.}
\end{proposition}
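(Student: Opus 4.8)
The plan is to exhibit the canonical projection as a surjective homomorphism in the full language $\{\wedge,\vee,\odot,\rightarrow,0,1,\forall\}$ and then appeal to the fact that $\mathbb{UMTL}$, being a variety, is closed under homomorphic images. Since $F$ is in particular a (proper) filter, the quotient $L/F$ carries its usual MTL-algebra structure and the map $\pi\colon L\to L/F$, $\pi(x)=[x]$, is a surjective MTL-homomorphism; here, by Theorem 4.4, $[x]=[y]$ means exactly $x\rightarrow y,\,y\rightarrow x\in F$. It therefore remains only to handle the quantifier.

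First I would check that $\forall_F$ is well defined, i.e. that the prescription $\forall_F([x])=[\forall x]$ does not depend on the chosen representative. Assume $[a]=[b]$, so $a\rightarrow b\in F$. Because $F$ is a \emph{U-filter} we get $\forall(a\rightarrow b)\in F$, and Proposition 3.4(5) gives $\forall(a\rightarrow b)\le \forall a\rightarrow\forall b$; as $F$ is upward closed this yields $\forall a\rightarrow\forall b\in F$. The symmetric argument gives $\forall b\rightarrow\forall a\in F$, whence $[\forall a]=[\forall b]$. Thus $\forall_F$ is a genuine unary operation on $L/F$. I expect this to be the only real step: it is precisely here that the hypothesis ``$x\in F\Rightarrow\forall x\in F$'' is used, and Example 4.3 shows that an arbitrary filter would not suffice.

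With well-definedness in hand, the identity $\pi(\forall x)=[\forall x]=\forall_F([x])=\forall_F(\pi(x))$ shows that $\pi$ commutes with $\forall$; since $\pi$ already preserves $\wedge,\vee,\odot,\rightarrow,0,1$, it is a surjective homomorphism of $(L,\forall)$ onto $(L/F,\forall_F)$ in the language of UMTL-algebras. Because the class $\mathbb{UMTL}$ is a variety, it is closed under homomorphic images, so $(L/F,\forall_F)$ is again a UMTL-algebra; in particular (U1)--(U3) hold on the quotient with no further computation.

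If one prefers to avoid this meta-argument, the three axioms can instead be verified directly, each reducing to the corresponding axiom in $L$ after transporting through $\pi$: for (U1), $\forall_F[x]=[\forall x]\le[x]$ because $\forall x\le x$ in $L$; for (U2) and (U3) one rewrites both sides using the $[\,\cdot\,]$-compatibility of $\rightarrow$ and $\odot$ together with $\forall_F[x]=[\forall x]$, so that the desired equalities of classes follow at once from (U2) and (U3) holding between the representatives in $L$. Either route completes the proof.
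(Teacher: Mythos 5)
Your argument is correct and follows essentially the same route as the paper, which states this proposition as a direct consequence of the U-congruence/U-filter correspondence (Theorem 4.4): your well-definedness check via the U-filter property and Proposition 3.4(5) is exactly the verification that $\equiv_F$ is compatible with $\forall$, after which closure of the variety $\mathbb{UMTL}$ under homomorphic images finishes the job. You have simply written out the details that the paper leaves implicit, and you correctly identify the one place where the U-filter hypothesis is genuinely needed.
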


Now, we give some characterizations of representable UMTL-algebras.

\begin{definition}\emph{A UMTL-algebra is called \emph{representable} if it is a subdirect product of a system of linearly ordered UMTL-algebras.}
\end{definition}

\begin{theorem}\emph{Let $(L,\forall)$ be a UMTL-algebra. Then the following statements are equivalent: for any $x,y\in L$,}
\begin{enumerate}[(1)]
  \item\emph{$(L,\forall)$ is representable,}
  \item \emph{$\forall(x\rightarrow y)\vee(y\rightarrow x)=1$,}
  \item \emph{$x\vee y=1$ implies $x\vee \forall y=1$,}
  \item \emph{any minimal prime filter is a U-filter of $(L,\forall)$.}
\end{enumerate}
\end{theorem}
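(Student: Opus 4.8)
The plan is to establish the cycle $(1) \Rightarrow (2) \Rightarrow (3) \Rightarrow (4) \Rightarrow (1)$, leaning on the representation theory of the underlying MTL-algebra (Theorems 2.1 and 2.3), the quotient construction of Proposition 4.5, and the quantifier identities of Proposition 3.4. For $(1) \Rightarrow (2)$ I would argue that, since (2) is an identity and identities are preserved by subalgebras and direct products, it suffices to verify it in a single linearly ordered UMTL-algebra. There, for given $x,y$ one of $x \to y$ or $y \to x$ equals $1$, and applying $\forall 1 = 1$ (Proposition 3.4(2)) makes the join equal $1$.

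For $(2) \Rightarrow (3)$ I would start from $x \vee y = 1$ and use the standard MTL-identity $x \vee y = ((x \to y) \to y) \wedge ((y \to x) \to x)$ to read off $x \to y \le y$ and $y \to x \le x$. Monotonicity of $\forall$ (Proposition 3.4(4)) gives $\forall(x \to y) \le \forall y$, so instantiating (2) yields
\begin{center}
$1 = \forall(x \to y) \vee (y \to x) \le \forall y \vee x$,
\end{center}
that is, $x \vee \forall y = 1$. For $(3) \Rightarrow (4)$ I would invoke Theorem 2.1: a minimal prime $P$ satisfies $P = \bigcup\{a^{\bot} : a \notin P\}$ with $a^{\bot} = \{x : a \vee x = 1\}$. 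If $x \in P$, choose $a \notin P$ with $a \vee x = 1$; then (3) gives $a \vee \forall x = 1$, so $\forall x \in a^{\bot} \subseteq P$, proving $P$ is a U-filter.

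The last implication $(4) \Rightarrow (1)$ is where the real work lies. I would use that every MTL-algebra is representable, so by Theorem 2.3 the intersection of all prime filters is $\{1\}$; since every prime filter contains a minimal prime (a Zorn's lemma argument on the set of primes below it), the intersection of all minimal prime filters is again $\{1\}$. Hypothesis (4) makes each minimal prime $P$ a U-filter, so Proposition 4.5 turns each quotient into a UMTL-algebra $(L/P, \forall_P)$, which is linearly ordered because $P$ is prime. The diagonal map $L \to \prod_P L/P$ over all minimal primes is then a UMTL-homomorphism (the U-filter property is exactly what guarantees $\forall$ is respected), and it is injective because $\bigcap_P P = \{1\}$ forces $x \to y = y \to x = 1$ whenever two images coincide. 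This realises $(L,\forall)$ as a subdirect product of chains.

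I expect the main obstacle to be precisely the bookkeeping in $(4) \Rightarrow (1)$: one must check that the U-filter hypothesis is exactly the extra ingredient that upgrades the automatic MTL-subdirect representation into one in the category of UMTL-algebras, i.e. that $\forall$ descends compatibly to every chain factor $L/P$. By contrast, the steps $(1) \Rightarrow (2) \Rightarrow (3) \Rightarrow (4)$ are short once the decomposition of $\vee$ and the minimal-prime description of Theorem 2.1 are in place.
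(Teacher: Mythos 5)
Your proposal is correct and follows essentially the same route as the paper's own proof: verifying (2) on chains for $(1)\Rightarrow(2)$, reducing $x\vee y=1$ to information about $x\to y$ and $y\to x$ for $(2)\Rightarrow(3)$, using the minimal-prime characterization $P=\bigcup\{a^{\bot}: a\notin P\}$ for $(3)\Rightarrow(4)$, and upgrading the standard MTL subdirect representation over minimal primes via Proposition 4.5 for $(4)\Rightarrow(1)$. Your version of $(2)\Rightarrow(3)$ (via $x\vee y=((x\to y)\to y)\wedge((y\to x)\to x)$ and monotonicity of $\forall$) and of $(3)\Rightarrow(4)$ are in fact stated a bit more carefully than the paper's, but the underlying argument is the same.
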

\begin{proof} $(1)\Rightarrow (2)$ If $(L,\forall)$ is representable, then an equation holds in a representable UMTL-algebra if and only if it holds in the linearly ordered UMTL-algebras. Thus, we only need to prove that
\begin{center}
$\forall(x\rightarrow y)\vee (y\rightarrow x)=1$
 \end{center}
holds in any linearly ordered UMTL-algebra. In fact, if~$x\leq y$, then~$\forall(x\rightarrow y)=1$, and hence $\forall(x\rightarrow y)\vee (y\rightarrow x)=1$. Conversely, if~$y\leq x$,~then~$y\rightarrow x=1$, and hence~$\forall(x\rightarrow y)\vee (y\rightarrow x)=1$.

$(2)\Rightarrow (3)$~If~$x\vee y=1$, then~$x\rightarrow y=y$,~$y\rightarrow x=x$, and hence
\begin{center}
$x\vee \forall y=\forall(x\rightarrow y)\vee (y\rightarrow x)=1$,
\end{center}
 which implies that (3) holds in any UMTL-algebras.

$(3)\Rightarrow (4)$ If $F$ is a prime filter of $L$, and $x,y\in F$, then there exists~$z\in L$~such that~$z\notin F$ and $z\wedge x=1$. Since $F$~is prime filter and $z\notin F$, by $z\vee \forall x=1\in F$, we have $\forall x\in F$. Similarly, we have~$\forall y\in F$, and hence $\forall(x\odot y)\in F$, which implies that $F$ is a U-filter of $(L,\forall)$.

$(4)\Rightarrow(1)$ Let~$(L,\forall)$~be a UMTL-algebra and ~$\mathcal{F}$ be the set of all the minimal prime filters of MTL-algebra~$L$. Notice that any MTL-algebra~$L$~is a subdirect product of the family~$\{L/\sim_F|F\in \mathcal{F}\}$,~and~let
\begin{center}
$\imath:L\rightarrow \Pi_{F\in \mathcal{F}}L/F$
\end{center}
be the corresponding representation. Then follows from Proposition 4.5 that $(L/F,\forall_F)$~is a UMTL-algebra. It is straightforward that~$\imath$~is a presentation of $(L,\forall)$ as a subdirect product of the family $\{L/\sim_F|F\in \mathcal{F}\}$.
\end{proof}

The next theorem shows that any linearly ordered MTL-algebra has a structure of representable UMTL-algebra.

\begin{theorem}\emph{Let $L$ be an MTL-algebra. Then the following statements are equivalent:}
\begin{enumerate}[(1)]
\item\emph{$(L,\Delta)$ is a representable UMTL-algebra, where $\triangle$ is defined in Example 3.6.}
\item\emph{$L$ is a linearly ordered MTL-algebra.}
\end{enumerate}
\end{theorem}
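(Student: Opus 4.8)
The plan is to reduce everything to the representability criterion established in the preceding Theorem, namely that a UMTL-algebra $(L,\forall)$ is representable if and only if $\forall(x\rightarrow y)\vee(y\rightarrow x)=1$ holds for all $x,y$. Here $\forall=\triangle$ is the two-valued quantifier with $\triangle x=1$ when $x=1$ and $\triangle x=0$ otherwise, so the whole argument will hinge on exploiting the fact that $\triangle$ takes only the values $0$ and $1$.

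First I would settle the direction $(2)\Rightarrow(1)$. Assuming $L$ is linearly ordered, $(L,\triangle)$ is itself a linearly ordered UMTL-algebra and hence, being a (one-factor) subdirect decomposition of itself, representable in the sense of Definition 4.6. Equivalently, one can check the criterion directly: for any $x,y$, either $x\le y$, so that $\triangle(x\rightarrow y)=\triangle 1=1$ and the join is $1$, or $y\le x$, so that $y\rightarrow x=1$ and again the join is $1$.

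For $(1)\Rightarrow(2)$, suppose $(L,\triangle)$ is a representable UMTL-algebra. By the criterion, $\triangle(x\rightarrow y)\vee(y\rightarrow x)=1$ for all $x,y$. The key step is to convert this prelinear identity into genuine comparability using two-valuedness: since $\triangle(x\rightarrow y)\in\{0,1\}$, if $\triangle(x\rightarrow y)=1$ then $x\rightarrow y=1$, i.e. $x\le y$; and if $\triangle(x\rightarrow y)=0$, then the identity collapses to $0\vee(y\rightarrow x)=y\rightarrow x=1$, i.e. $y\le x$. Thus any two elements of $L$ are comparable, so $L$ is linearly ordered.

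The routine but genuinely necessary preliminary is to confirm that $(L,\triangle)$ is a UMTL-algebra at all, i.e. that $\triangle$ satisfies (U1)--(U3); this is exactly the content of the example in which $\triangle$ is introduced, so I would simply invoke it. I expect the only place where something beyond bookkeeping happens to be the observation in $(1)\Rightarrow(2)$ that, because $\triangle$ is $\{0,1\}$-valued, the join $\triangle(x\rightarrow y)\vee(y\rightarrow x)$ can equal $1$ only when one of its two arguments already equals $1$; this is precisely what upgrades the prelinearity-type condition of the representability criterion to actual linearity of the order.
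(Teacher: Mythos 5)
Your proposal is correct and follows essentially the same route as the paper: both directions reduce to the criterion $\forall(x\rightarrow y)\vee(y\rightarrow x)=1$ of the preceding representability theorem, and the key step in $(1)\Rightarrow(2)$ — using the $\{0,1\}$-valuedness of $\triangle$ to force $x\rightarrow y=1$ or $y\rightarrow x=1$ — is exactly the paper's argument, merely phrased as a case split on $\triangle(x\rightarrow y)$ rather than on $x\nleq y$.
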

\begin{proof} $(1)\Rightarrow(2)$ Let $(L,\Delta)$ be a representable UMTL-algebra and $x,y$ be two arbitrary elements of $L$. If $x\nleq y$, then $x\rightarrow y\neq 1$. So $\Delta(x\rightarrow y,1)=0$, further by Theorem 4.7(2), we get $y\rightarrow x=1$ and hence $y\leq x$. Thus, $L$ is a linearly ordered MTL-algebra.

$(2)\Rightarrow (1)$ Let $L$ be a linearly ordered MTL-algebra and $x,y$ be two arbitrary elements of $L$. If $x\leq y$ then $x\rightarrow y=1$ and, if $y\leq x$ then $y\rightarrow x=1$, hence in both cases, $\Delta(x\rightarrow y)\vee (y\rightarrow x)=1$. Thus, $(L,\Delta)$ is a representable UMTL-algebra.
\end{proof}

As an application of Theorem 4.8, we give some remarks as follows.

\begin{remark}
 \begin{enumerate}[(a)]
 \item\emph{If we replace Theorem 4.8(2) with $L$ is representable, then it is not true in general. Indeed, there exists a UMTL-algebra in Example 3.2 which is not representable, where $L$ is representable but is not linear.}
 \item\emph{Every representable MTL-algebra can be embedded in a representable UMTL-algebra. Indeed, if $L$ is representable, then $L$ is isomorphic to a subdirect product of linearly ordered MTL-algebras. From Theorem 4.8, any linearly ordered MTL-algebra has a structure of representable UMTL-algebra. Moreover, the class of representable UMTL-algebras is also a variety, so a direct product of representable UMTL-algebra is still a representable UMTL-algebra.}

     \end{enumerate}
   \end{remark}

 Serval authors introduced the strong universal quantifier, which is a universal quantifier $\forall$ satisfies the following condition:
\begin{center}
 $(\ast)$  $\forall(x\vee y)=\forall x\vee \forall y$.
 \end{center}
and proved that every strong algebra is representable \cite{Wang}. Indeed, strong and representable UMTL-algebras coincide, see the following theorem.

\begin{theorem}\emph{Let $(L,\forall)$ be a UMTL-algebra. Then the following statements are equivalent:}
\begin{enumerate}[(1)]
  \item\emph{$(L,\forall)$ is representable,}
  \item \emph{$(L,\forall)$ is strong.}
\end{enumerate}
\end{theorem}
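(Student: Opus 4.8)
The plan is to route everything through Theorem 4.7, which already identifies representability with the identity $\forall(x\rightarrow y)\vee(y\rightarrow x)=1$, and to exploit the prelinearity $(x\rightarrow y)\vee(y\rightarrow x)=1$ that holds in every MTL-algebra. Both implications then become short computations, so the real content is recognizing which earlier facts to combine rather than any genuinely hard estimate.

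For $(2)\Rightarrow(1)$, I would start from the strong condition $(\ast)$. By prelinearity we have $(x\rightarrow y)\vee(y\rightarrow x)=1$; applying $\forall$ and using $\forall 1=1$ (Proposition 3.4(2)) together with $(\ast)$ gives $\forall(x\rightarrow y)\vee\forall(y\rightarrow x)=1$. The key step is then to weaken the second disjunct: since $\forall(y\rightarrow x)\leq y\rightarrow x$ by (U1), we obtain
\begin{center}
$\forall(x\rightarrow y)\vee(y\rightarrow x)\geq\forall(x\rightarrow y)\vee\forall(y\rightarrow x)=1$,
\end{center}
hence $\forall(x\rightarrow y)\vee(y\rightarrow x)=1$. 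This is precisely condition (2) of Theorem 4.7, so $(L,\forall)$ is representable.

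For $(1)\Rightarrow(2)$, I would use that a representable algebra satisfies an equation if and only if each of its linearly ordered subdirect factors does, so it suffices to verify $(\ast)$ in an arbitrary linearly ordered UMTL-algebra. One inequality, $\forall x\vee\forall y\leq\forall(x\vee y)$, holds in \emph{every} UMTL-algebra, because $x\leq x\vee y$ and $y\leq x\vee y$ yield $\forall x\leq\forall(x\vee y)$ and $\forall y\leq\forall(x\vee y)$ by monotonicity (Proposition 3.4(4)). For the reverse inequality in a chain, I would argue by cases on the linear order: assuming without loss of generality $x\leq y$, we have $x\vee y=y$ and, again by Proposition 3.4(4), $\forall x\leq\forall y$, so $\forall(x\vee y)=\forall y=\forall x\vee\forall y$. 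Combining the two inequalities gives $(\ast)$ in every chain, and therefore in $(L,\forall)$.

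I do not anticipate a serious obstacle here; the only step requiring care is the weakening via (U1) in $(2)\Rightarrow(1)$, since a naive application of strongness only produces $\forall(x\rightarrow y)\vee\forall(y\rightarrow x)=1$, which is formally stronger than, but must be downgraded to, the exact form of Theorem 4.7(2). Everything else reduces to the subdirect representation and the monotonicity of $\forall$.
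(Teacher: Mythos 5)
Your proposal is correct and follows essentially the same route as the paper: $(1)\Rightarrow(2)$ by reducing the identity $(\ast)$ to linearly ordered factors, and $(2)\Rightarrow(1)$ by applying $\forall$ to the prelinearity identity, using strongness and $\forall 1=1$, then weakening one disjunct via (U1) to reach condition (2) of Theorem 4.7. Your version even spells out the case analysis in chains that the paper leaves implicit, so no further comment is needed.
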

\begin{proof} $(1)\Rightarrow (2)$ If $(L,\forall)$ is representable, then an equation holds in a general UMTL-algebra if and only if it holds in the linearly ordered UMTL-algebras. Hence
\begin{center}
$\forall(x\vee y)=\forall(x)\vee \forall(y)$
\end{center}
holds in all UMTL-algebras, which implies that $(L,\forall)$ is a strong UMTL-algebra.

$(2)\Rightarrow (1)$ If $(L,\forall)$ is a strong UMTL-algebra, then
\begin{center}
$1=\forall(x\rightarrow y)\vee \forall(y\rightarrow x)\leq \forall(x\rightarrow y)\vee (y\rightarrow x)$,
 \end{center}
 and hence
 \begin{center}
 $\forall(x\rightarrow y)\vee (y\rightarrow x)=1$,
 \end{center}
which implies that $(L,\forall)$ is representable follows from Theorem 4.7(2).
\end{proof}

\begin{remark}\emph{ $(1)$ Theorem 4.10 shows that strong UMTL-algebras are not a new class of UMTL-algebras but coincide with representable UMTL-algebras.}

\emph{ $(2)$ It is naturally verified that every strong universal quantifier $\forall$ on Boolean algebra is equivalent to the identity. Indeed, every strong monadic Boolean algebra is representable as a 2-element monadic Boolean algebra, in which every universal quantifier is equivalent to be the identity.}
\end{remark}

\begin{theorem}\emph{ Let $(L,\forall)$ be a UMTL-algebra and $F$ a proper U-filter of $(L,\forall)$. Then the following statements are equivalent:}
\begin{enumerate}[(1)]
\item \emph{$F$ is a maximal U-filter of $(L,\forall)$,
  \item for any $a\notin F$, there is an integer $n\geq 1$ such that $((\forall a)^n)^\ast\in F$.}
\end{enumerate}
\end{theorem}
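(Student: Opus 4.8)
The plan is to reduce the maximality of $F$ to a statement about the U-filter $\langle F,a\rangle_\forall$ generated by $F$ together with an element $a\notin F$, and then to translate the condition $\langle F,a\rangle_\forall=L$ into the algebraic form appearing in (2). I will use the explicit generation formula recorded just before Example 4.2, namely $\langle F,a\rangle_\forall=\{y\in L: y\geq f\odot(\forall a)^n$ for some $f\in F,\ n\geq 1\}$. The two ingredients I expect to invoke repeatedly are the adjunction $x\odot(x\rightarrow y)\leq y$ (Definition 2.1(3)), together with the standard consequence that any filter is closed under $\odot$, and the defining property of a U-filter, that $a\in G$ implies $\forall a\in G$.

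For $(1)\Rightarrow(2)$, I would fix $a\notin F$ and first observe that $\langle F,a\rangle_\forall$ strictly contains $F$. Indeed, taking $f=1$ and $n=1$ shows $\forall a\in\langle F,a\rangle_\forall$, and since $\forall a\leq a$ by (U1), upward closure gives $a\in\langle F,a\rangle_\forall\setminus F$. By maximality, $\langle F,a\rangle_\forall=L$, so $0\in\langle F,a\rangle_\forall$; hence there are $f\in F$ and $n\geq 1$ with $f\odot(\forall a)^n=0$. Residuation then yields $f\leq(\forall a)^n\rightarrow 0=((\forall a)^n)^\ast$, and since $F$ is upward closed, $((\forall a)^n)^\ast\in F$, which is exactly (2).

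For $(2)\Rightarrow(1)$, I would argue by contradiction. Suppose $G$ is a proper U-filter with $F\subsetneq G$, and pick $a\in G\setminus F$. Applying (2) to $a$ gives some $n\geq 1$ with $((\forall a)^n)^\ast\in F\subseteq G$. Since $G$ is a U-filter, $\forall a\in G$, whence $(\forall a)^n\in G$ because filters are closed under $\odot$. But $(\forall a)^n\odot((\forall a)^n)^\ast\leq 0$ by the adjunction, so $0\in G$, contradicting the properness of $G$. Therefore $F$ admits no proper U-filter strictly above it, i.e.\ $F$ is a maximal U-filter.

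The computations here are routine once the generation formula is in hand; the only point requiring genuine care -- and which I regard as the main obstacle -- is establishing the strict containment $F\subsetneq\langle F,a\rangle_\forall$ in the first direction. This is precisely where (U1) enters: without $\forall a\leq a$ one could not conclude that the generating element $a$ itself lies in the U-filter generated by $F\cup\{a\}$, and so could not legitimately invoke maximality to force $\langle F,a\rangle_\forall=L$.
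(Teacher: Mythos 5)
Your proof is correct and follows essentially the same route as the paper's: both directions reduce maximality to the identity $\langle F,a\rangle_\forall=L$ via the generation formula, extract $f\odot(\forall a)^n=0$, and pass through residuation to $((\forall a)^n)^\ast$. The only cosmetic differences are that the paper needlessly applies $\forall$ to get $\forall f\leq((\forall a)^n)^\ast$ where your direct use of upward closure from $f\leq((\forall a)^n)^\ast$ suffices, and your converse direction, argued through an arbitrary strictly larger U-filter $G$, is actually spelled out more carefully than the paper's rather terse version.
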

\begin{proof} $(1)\Rightarrow (2)$ Let $F$ be a maximal U-filter of $(L,\forall)$ and $a\notin F$. Then $\langle F,a\rangle_\forall=L$, which implies $0\in \langle F,a\rangle_\forall$. Hence there is $f\in F$ and an integer $n\geq 1$ such that $0=f\odot (\forall a)^n$, that is, $\forall 0=0\geq \forall f\odot (\forall a)^n$. Thus, $\forall f\leq ((\forall a)^n)^\ast$. Therefore, $((\forall a)^n)^\ast\in F$.

$(2)\Rightarrow (1)$ If $a$ satisfy the condition (2), then $((\forall a)^n)^\ast\odot ((\forall a)^n)=0$, and hence $((\forall a)^n)^\ast\in F$. Hence $0\in \langle F,a\rangle_\forall$, that is, $\langle F,a\rangle_\forall=L$. Therefore, $F$ is a maximal U-filter of $(L,\forall)$.
\end{proof}

\begin{definition}\emph{A UMTL-algebra $(L,\forall)$ is said to be \emph{simple} if it has exactly two U-filters: $\{1\}$ and $L$.}
\end{definition}

\begin{theorem} \emph{Let $(L,\forall)$ be a UMTL-algebra. Then the following statements are equivalent:}
\begin{enumerate}[(1)]
  \item \emph{$(L,\forall)$ is simple,
  \item $\forall L$ is simple,
  \item $L_{\forall}=\{0,1\}$,
  \item $\langle 1\rangle$ is the only proper U-filter in $(L,\forall)$,
  \item for any $x\in L$, $x\neq 1$ implies $ord(\forall x)<\infty$.}
\end{enumerate}
\end{theorem}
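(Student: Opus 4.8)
The plan is to route all five conditions through the tight correspondence between U-filters of $(L,\forall)$ and ordinary filters of the subalgebra $\forall L=L_\forall$ (Proposition 3.4(13),(14)), together with the description of principal U-filters $\langle a\rangle_\forall=\{x\mid x\geq(\forall a)^n,\ n\geq 1\}$ recorded after Definition 4.1. First I would set up an order-preserving bijection between $UF[L,\forall]$ and the filter lattice of $\forall L$: send a U-filter $F$ to $F\cap\forall L$, and a filter $G$ of $\forall L$ to the filter it generates in $L$. The points to verify are that $F\cap\forall L$ is a filter of $\forall L$; that the $L$-filter generated by $G\subseteq\forall L$ is automatically a U-filter, since $x\geq g$ with $g\in G\subseteq L_\forall$ gives $\forall x\geq\forall g=g$ by Proposition 3.4(4); and that the two maps are mutually inverse, the nontrivial direction using that $x\in F$ forces $\forall x\in F$ with $\forall x\leq x$. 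As this bijection carries $\{1\}\mapsto\{1\}$ and $L\mapsto\forall L$, it yields $(1)\Leftrightarrow(2)$ at once.

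The equivalence $(1)\Leftrightarrow(4)$ is then essentially definitional: $\langle 1\rangle=\{x\mid x\geq 1\}=\{1\}$ is always a proper U-filter, so (4) merely restates that $\{1\}$ and $L$ are the only U-filters. For $(1)\Rightarrow(5)$ I would exploit principal U-filters. Given $x\neq 1$, note $\forall x\neq 1$ (from (U1), $\forall x=1$ forces $1\leq x$) and $\forall x\in L_\forall$. Since $\forall x\in\langle\forall x\rangle_\forall$ and $\forall x\neq 1$, this U-filter is not $\{1\}$, so by simplicity it equals $L$; hence $0\in\langle\forall x\rangle_\forall$, i.e. $(\forall x)^n=0$ for some $n$, giving $ord(\forall x)<\infty$. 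Conversely, for $(5)\Rightarrow(1)$, let $F$ be a proper U-filter with $F\neq\{1\}$ and pick $b\in F$ with $b\neq 1$; then $\forall b\in F$ because $F$ is a U-filter, and since filters are closed under $\odot$ we get $(\forall b)^n\in F$, which equals $0$ by (5), contradicting properness. The easy implication $(3)\Rightarrow(5)$ is immediate, because $\forall x\in L_\forall=\{0,1\}$ together with $\forall x\neq 1$ forces $\forall x=0$, of order $1$.

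The main obstacle I anticipate is closing the cycle back to $(3)$, namely deriving $L_\forall=\{0,1\}$ from the remaining conditions. The delicate point is that finiteness of the orders of the fixpoints, or triviality of the filter lattice of $\forall L$, does not visibly collapse $\forall L$: if $a=\forall a$ is a nonzero proper fixpoint with $ord(a)<\infty$, then $\langle a\rangle_\forall=\{x\mid x\geq a^n\}$ already contains $0$ and so equals $L$, which is \emph{not} proper, and therefore simplicity produces no contradiction from the mere existence of $a$. To push through $(3)$ one must argue inside the simple subalgebra $\forall L$ (Proposition 3.4(14)) that such a fixpoint cannot coexist with the absence of proper nontrivial filters, presumably using tight control of the powers $(\forall a)^n$ and the annihilating element $((\forall a)^n)^{\ast}$ furnished by Theorem 4.12. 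This is where I would concentrate the sharpest part of the reasoning, and where I expect the verification to demand the most care; the remaining four equivalences, by contrast, follow cleanly from the U-filter/filter correspondence and the principal-filter computation above.
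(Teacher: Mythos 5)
Your treatment of $(1)\Leftrightarrow(2)$, $(1)\Leftrightarrow(4)$ and $(1)\Leftrightarrow(5)$ is essentially the paper's own argument: the paper also passes between U-filters of $(L,\forall)$ and filters of $\forall L$ (in one direction via $F\mapsto F\cap\forall L$, in the other via the $L$-filter generated by a filter of $\forall L$, which is a U-filter precisely because $\forall z\geq\forall f=f$ for $f\in\forall L$), treats $(4)$ as a restatement of the definition, and gets $(5)$ from the computation $0\in\langle x\rangle_\forall$ iff $(\forall x)^n=0$ for some $n$. Those parts of your proposal are correct and need no further comment.

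The obstacle you flag at $(3)$ is not a defect of your write-up: it is a genuine gap, and it cannot be closed, because $(3)$ is not equivalent to the other four conditions. The paper silently skips this point (it asserts that $(2)\Leftrightarrow(3)$ ``follows from the definition,'' which it does not: a simple MTL-algebra need not be two-element). Concretely, the identity map $\forall=\mathrm{id}_L$ satisfies (U1)--(U3) on any MTL-algebra, so take $L$ to be the standard MV-algebra on $[0,1]$ with $\forall=\mathrm{id}$. Every $x<1$ satisfies $x^n=\max\{0,nx-(n-1)\}=0$ for large $n$, so $L$ has no nontrivial proper filters; hence $(L,\forall)$ satisfies $(1)$, $(2)$, $(4)$ and $(5)$, while $L_\forall=[0,1]\neq\{0,1\}$, so $(3)$ fails. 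Thus you should not expect any amount of work with $((\forall a)^n)^{\ast}$ or Theorem 4.12 to recover $(3)$; the correct conclusion obtainable from $(2)$ is only that $\forall L$ is a simple MTL-algebra (equivalently, every $a\in L_\forall\setminus\{1\}$ has finite order), and condition $(3)$ should be weakened to that statement or dropped. Your instinct to ``concentrate the sharpest part of the reasoning'' there was exactly right; the sharpest part of the reasoning is a counterexample.
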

\begin{proof} $(1)\Rightarrow(2)$ Let $(L,\forall)$ be simple and $F$ be a filter of $\forall L$ and $F\neq\{1\}$.  Then we will prove that $\forall L$ is simple. Considering the set
\begin{center}
$F_f=\{z\in L|z\geq f$ for a certain $f\in F\}$.
 \end{center}
 If $x,y\in F_f$, then there exist $f_1, f_2\in F$ such that $x\geq f_1, y\geq f_2$, and hence $x\odot y\geq f_1\odot f_2\in F$, which implies $x\odot y\in F_f$.
If $x\in F_f$ and $x\leq y$, then $y\in F_f$. Moreover, if $x\in F_f$, then $x\geq f, f\in F$, and hence $\forall x\geq \forall f=f$ (since $f\in \forall L$), which implies $\forall x\in F_f$. Thus, $F_f$ is a U-filter of $(L,\forall)$. Since $(L,\forall)$ is simple, and $F_f\neq\{1\}$ (since $F\subseteq F_f$). It follows that $F_f=L$, and so $0\in F_f$, hence $F=\forall L$, that is, $\forall L$ is simple.

$(2)\Rightarrow(1)$ Let $F$ be a U-filter of $(L,\forall)$. Then $F\cap \forall L$ is a filter of $\forall L$, and hence $F\cap \forall L=\{1\}$ or $F\cap \forall L=\forall L$. If $F\cap \forall L=\forall L$, then $\forall L\subseteq F$. Since $0\in\forall L$, we have $F=L$. If $F\cap \forall L=\{1\}$ and $x\in F$, then $\forall x\in F\cap\forall L$. So $\forall x=1$, that is, $x=1$ (since Ker$(\forall)=\{1\}$), and hence $F=\{1\}$. Thus, $(L,\forall)$ is simple.

$(2)\Leftrightarrow(3)$ and $(1)\Leftrightarrow(4)$ are follows from Definition 4.11.

$(1)\Leftrightarrow(5)$ From $(1)\Leftrightarrow (4)$, we obtain that $(L,\forall)$ is simple if and only if $\{1\}$ is the unique proper U-filter of $(L,\forall)$. Also, $\{1\}$ is the unique proper U-filter of $(L,\forall)$ if and only if for any $x\in L$, if $x\neq 1$, then $\langle x\rangle_\forall=L$ if and only if $0\in \langle x\rangle_\forall$ if only if $(\forall x)^2=0$ if and only if  for any $x\in L$, $x\neq 1$ implies $ord(\forall x)<\infty$.
\end{proof}

\begin{theorem} \emph{Let $(L,\forall)$ be a UMTL-algebra and $F$ be a proper U-filter of $(L,\forall)$. Then the following statements are equivalent:}
\begin{enumerate}[(1)]
 \item \emph{$(L/F,\forall_F)$ is a simple UMTL-algebra,
\item  $F$ is the maximal U-filter of $(L,\forall)$.}
\end{enumerate}
\end{theorem}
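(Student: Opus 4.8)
The plan is to exploit the standard correspondence between the U-filters of the quotient $(L/F,\forall_F)$ and the U-filters of $(L,\forall)$ that contain $F$, and then read off the statement from the definition of simplicity. Write $\pi\colon L\rightarrow L/F$, $\pi(x)=[x]$, for the canonical surjection, which by Proposition 4.5 is a homomorphism of UMTL-algebras. Recall that $[x]=[1]$ if and only if $x\rightarrow 1,1\rightarrow x\in F$, i.e.\ if and only if $x\in F$; thus $\{[1]\}$ is the trivial U-filter of $L/F$ and $F=\pi^{-1}(\{[1]\})$.

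First I would set up the two maps $H\mapsto \pi^{-1}(H)$ and $G\mapsto \pi(G)=G/F$ and check that they restrict to mutually inverse, inclusion-preserving bijections between $UF[L/F,\forall_F]$ and the U-filters of $(L,\forall)$ containing $F$. The routine verifications are: $\pi^{-1}(H)$ is a U-filter containing $F$ (using that $\pi$ commutes with $\rightarrow$, $\odot$ and $\forall$, together with the U-filter axioms for $H$); $\pi(G)$ is a U-filter of $L/F$ whenever $F\subseteq G$; and $\pi^{-1}(\pi(G))=G$ precisely because $G\supseteq F$. Under this correspondence the bottom member $\{[1]\}$ matches $F$ and the top member $L/F$ matches $L$.

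For $(2)\Rightarrow (1)$, assume $F$ is maximal and let $H$ be a U-filter of $L/F$ with $H\neq\{[1]\}$; I would pull it back to $G=\pi^{-1}(H)$, a U-filter containing $F$. Choosing $[x]\in H$ with $[x]\neq[1]$ gives $x\in G\setminus F$, so $G\supsetneq F$ and maximality forces $G=L$; hence $[0]=\pi(0)\in H$, and a U-filter containing the bottom element is all of $L/F$. So the only U-filters of $L/F$ are $\{[1]\}$ and $L/F$, i.e.\ $(L/F,\forall_F)$ is simple by Definition 4.11. Conversely, for $(1)\Rightarrow (2)$, assume $L/F$ is simple and suppose $G$ is a U-filter with $F\subseteq G\subseteq L$; then $\pi(G)$ is a U-filter of $L/F$, so it equals $\{[1]\}$ or $L/F$. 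In the first case $G=\pi^{-1}(\{[1]\})=F$; in the second case $[0]\in\pi(G)$, so some $g\in G$ satisfies $[g]=[0]$, whence $g\rightarrow 0\in F\subseteq G$, and then $g,g\rightarrow 0\in G$ give $0\in G$ by the filter property, so $G=L$. Thus $F$ is maximal.

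The step I expect to be the main obstacle is the second case of $(1)\Rightarrow(2)$ (equivalently, establishing that the top of $L/F$ pulls back to all of $L$): since MTL-algebras need not satisfy the involution law, I cannot argue via $g\odot\neg g=0$; instead the argument must go through the filter closure rule ``$g\in G$ and $g\rightarrow 0\in G$ imply $0\in G$'', after noting that $[g]=[0]$ forces $g\rightarrow 0\in F$. Once this point is handled correctly, everything else is a direct application of the U-filter axioms and Definition 4.11.
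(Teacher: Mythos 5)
Your argument is correct, but it follows a genuinely different route from the paper's. The paper proves $(1)\Rightarrow(2)$ by invoking the order-theoretic characterization of simplicity (Theorem 4.14(5)): for $x\in G\setminus F$ it extracts $n$ with $(\forall_F[x]_F)^n=[0]_F$, deduces $((\forall x)^n)^{\ast}\in F\subseteq G$, and combines this with $(\forall x)^n\in G$ to get $0\in G$; for $(2)\Rightarrow(1)$ it uses the explicit description of $\langle F,x\rangle_\forall$ to produce $f\odot(\forall x)^n=0$ with $f\in F$, concludes $((\forall x)^n)^{\ast}\in F$, and again lands in Theorem 4.14. You instead establish the inclusion-preserving bijection between $UF[L/F,\forall_F]$ and the U-filters of $(L,\forall)$ containing $F$, via $H\mapsto\pi^{-1}(H)$ and $G\mapsto\pi(G)$, and then read both implications directly off the definition of simplicity (Definition 4.13 in the paper's numbering, not 4.11). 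Your approach is the standard universal-algebraic correspondence argument: it is more abstract, avoids powers, orders and pseudo-complements entirely, and delivers the full lattice isomorphism rather than only the statement about maximal elements; the paper's approach is more computational but reuses the element-wise descriptions of generated U-filters and the characterizations in Theorems 4.12 and 4.14 that it has already set up. The key verifications you flag (that $\pi(G)$ is a U-filter when $F\subseteq G$, that $\pi^{-1}(\pi(G))=G$ precisely because $G\supseteq F$, and that $[g]=[0]$ forces $g\rightarrow 0\in F$ so that the filter closure rule yields $0\in G$) all go through. One small remark: your worry that one ``cannot argue via $g\odot\neg g=0$'' without involutivity is unfounded, since $g\odot(g\rightarrow 0)\leq 0$ holds in every residuated lattice by adjunction; but the modus-ponens closure rule you use instead is equally valid, so this does not affect the proof.
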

\begin{proof} $(1)\Rightarrow (2)$ Let $G$ be a U-filter of $(L,\forall)$ and $F\subseteq G$. Taking $x\in G\backslash F$, we have $[x]_F\neq [1]_F$. Since $(L/F,\forall_F)$ is simple, we obtain $ord((\forall_F[x]_F)^n)<\infty$. It follows that there exists $n\in N$ such that $(\forall_F[x]_F)^n=[0]_F$, that is, $[(\forall(x))^n]_F=[0]_F$. Hence $((\forall(x))^n)^\ast\in F\varsubsetneq G$. Combining $(\forall(x))^n\in G$, we have $0\in G$. Hence $G=L$. Therefore, $F$ is the maximal U-filter of $(L,\forall)$.

$(2)\Rightarrow (1)$ Let $F$ be the maximal U-filter of $(L,\forall)$ and $[x]_F\neq [1]_F$. Then $x\notin F$. It follows that $\langle F,x\rangle_\forall=L$, which implies that there exists $f\in F$ and $n\in N$ such that $f\odot(\forall x)^n=0$. Hence $f\leq ((\forall x)^n)^\ast$. Thus $((\forall x)^n)^\ast\in F$. It follows that $[(\forall x)^n]_F=[0]_F$. This means $[(\forall x)]^n{_F}=[0]_F$, that is, $ord((\forall_F[x]_F)^n)<\infty$. Therefore, by Theorem 4.14, we obtain that $(L/F,\forall_F)$ is a simple UMTL-algebra.
\end{proof}

The intersection all maximal U-filters of $(L,\forall)$ is called the \emph{radical} of $(L,\forall)$ and is denoted by $URad(L,\forall)$.

\begin{definition}\emph{ A UMTL-algebra $(L,\forall)$ is said to be \emph{semisimple} if the intersection of all maximal U-congruences of $(L,\forall)$ is the U-congruence $[0]$.}
\end{definition}

Notice that in any UMTL-algebra $(L,\forall)$, the U-congruences are in bijective correspondence with the U-filters. Then follows that $(L,\forall)$ is semisimple if and only if $URad(L,\forall)=\{1\}$.

\begin{definition}\emph{ Let $(L_1,\forall_1)$ and $(L_2,\forall_2)$ be two UMTL-algebras. A homomorphism
 $f:L_1\rightarrow L_2$
  of MTL-algebras is called a U-homomorphism between $(L_1,\forall_1)$ and $(L_2,\forall_2)$ if its satisfies
  \begin{center}
  $f(\forall_1x)=\forall_2f(x)$
  \end{center}
  for any $x\in L_1$.}
 \end{definition}

\begin{theorem}\emph{Let $(L,\forall)$ be a UMTL-algebra. Then the following statements are equivalent:}
\begin{enumerate}[(1)]
 \item \emph{$(L,\forall)$ is semisimple,
\item  $(L,\forall)$ is a subdirect product of a family of simple UMTL-algebras.}
\end{enumerate}
\end{theorem}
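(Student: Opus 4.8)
The plan is to exploit the translation between subdirect representations and families of U-filters whose intersection is trivial, which is made available by the congruence--filter isomorphism of Theorem 4.4 together with the characterization of simplicity in Theorem 4.15. Throughout I would use the remark following Definition 4.17 that $(L,\forall)$ is semisimple precisely when $URad(L,\forall)=\{1\}$, i.e.\ when the intersection of all maximal U-filters equals $\{1\}$; this converts the congruence-theoretic definition of semisimplicity into a statement about U-filters that meshes with the earlier machinery.

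For $(1)\Rightarrow(2)$, I would let $\{M_i\}_{i\in I}$ be the family of all maximal U-filters of $(L,\forall)$ and form the natural map
\[
\imath\colon L\longrightarrow \prod_{i\in I} L/M_i,\qquad \imath(a)=([a]_{M_i})_{i\in I}.
\]
Each projection $\pi_i\circ\imath$ is the canonical surjection $L\to L/M_i$, so the image is a subdirect product; by Proposition 4.5 each factor $(L/M_i,\forall_{M_i})$ is a UMTL-algebra, and by Theorem 4.15 it is in fact simple because $M_i$ is a maximal U-filter. It remains to check that $\imath$ is injective: if $\imath(a)=\imath(b)$ then $a\equiv_{M_i}b$ for every $i$, i.e.\ $a\rightarrow b,\ b\rightarrow a\in M_i$ for all $i$, so both belong to $\bigcap_{i\in I} M_i=URad(L,\forall)=\{1\}$ by semisimplicity; hence $a\rightarrow b=b\rightarrow a=1$ and $a=b$. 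Thus $\imath$ is a subdirect embedding of $(L,\forall)$ into a product of simple UMTL-algebras.

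For $(2)\Rightarrow(1)$, I would start from a subdirect embedding $\imath\colon L\hookrightarrow\prod_{i\in I}(L_i,\forall_i)$ with each $(L_i,\forall_i)$ simple and each composite $p_i\colon L\to L_i$, $p_i=\pi_i\circ\imath$, a surjective U-homomorphism. Setting $M_i=\{a\in L\mid p_i(a)=1\}$, the correspondence of Theorem 4.4 identifies $M_i$ with the kernel U-filter of $p_i$, and $L/M_i\cong L_i$ is simple, so Theorem 4.15 forces $M_i$ to be a maximal U-filter. Since $URad(L,\forall)$ is the intersection of all maximal U-filters, we get $URad(L,\forall)\subseteq\bigcap_{i\in I}M_i=\{a\mid \imath(a)=1\}=\{1\}$, the last equality because $\imath$ is injective. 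As $1\in URad(L,\forall)$ always, $URad(L,\forall)=\{1\}$, i.e.\ $(L,\forall)$ is semisimple.

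The routine parts are the componentwise verification that a product of simple UMTL-algebras is again a UMTL-algebra and that $\forall$ is evaluated coordinatewise, which I would take for granted from $\mathbb{UMTL}$ being equationally defined. The step deserving the most care is the identification of the kernel of each quotient/projection map with a \emph{maximal} U-filter: this rests entirely on Theorem 4.15 (the quotient is simple $\iff$ the U-filter is maximal) and on checking that $p_i$ being a surjective U-homomorphism makes $L/M_i$ genuinely isomorphic to the simple algebra $L_i$ as UMTL-algebras, not merely as MTL-algebras. Once that identification is secured, both implications reduce to the single observation that injectivity of the subdirect map is equivalent to the intersection of the associated maximal U-filters being $\{1\}$, which is exactly the radical condition defining semisimplicity.
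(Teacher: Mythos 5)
Your proposal is correct and follows essentially the same route as the paper: both directions use the bijection between U-congruences and U-filters, realize the subdirect embedding via the family of all maximal U-filters (with Proposition 4.5 and the quotient-simplicity criterion of Theorem 4.15 supplying simplicity of the factors), and reduce injectivity to $URad(L,\forall)=\bigcap M_i=\{1\}$, while the converse identifies the kernels of the projections as maximal U-filters whose intersection is trivial. Your write-up is in fact slightly more careful than the paper's at the two points you flag (the injectivity computation via $a\rightarrow b,\ b\rightarrow a\in\bigcap M_i$, and the explicit identification $L/M_i\cong L_i$ needed to invoke Theorem 4.15), so no changes are needed.
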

\begin{proof} $(1)\Rightarrow (2)$ Let $(L,\forall)$ be semisimple. Then for any $x\in L$, $x\neq 1$, there exists a maximal U-filter $F\notin UMax[L,\forall]$ such that $x\notin F$. Thus, we can check that the map
\begin{center}
$\varphi:(L,\forall)\rightarrow (\prod_{F\in UMax[L,\forall]}L/F, \prod_{UMax[L,\forall]}\forall_F)$
\end{center}
given by
\begin{center}
$\varphi(x)=([x]_F,[y]_F)_{F\in UMax[L,\forall]}$
\end{center}
is an injective U-homomorphism and $\pi_F\circ \varphi$ is  a surjective U-homomorphism, where
\begin{center}
$\pi_F:(\prod_{G\in UMax[L,\forall]}L/G, \prod_{G\in UMax[L,\forall]}\forall_G)\rightarrow (L/F,\forall_F)$
\end{center}
 is the projection. Also, by Theorem 4.14, we know that $(L/F,\forall_F)$ is simple. Thus, $(L,\forall)$ is a subdirect product of simple UMTL-algebras $\{(L/F,\forall_F)\}_{F\in MMax[L,\forall]}$.

$(2)\Rightarrow (1)$ Let
\begin{center}
$\varphi:(L,\forall)\rightarrow (\prod_{i\in I}L_i,\prod_{i\in I}\forall_i,)$
\end{center}
be an injective U-homomorphism, where $(L_i,\forall_i,)(i\in I)$ are simple UMTL-algebras, and let
\begin{center}
$\pi_i\circ\varphi:(L,\forall,)\rightarrow (L_i,\forall_i)$
\end{center}
be a surjective U-homomorphism. Set
\begin{center}
Ker$(\pi_i\circ\varphi)=F_i$
 \end{center}
 for any $i\in I$. Then we can prove that $F_i(i\in I)$ is the maximal U-filter of $(L,\forall)$. Now, let $x\in \cap\{F_i|i\in I\}$. Then
 \begin{center}
 $\pi_i\circ \varphi(x)=1$ for all $i\in I$,
  \end{center}
 and hence $\varphi(x)=1$. Since $\varphi$ is injective, we obtain $x=1$. Hence
 \begin{center}
 $\cap\{F_i|i\in I\}=\{1\}$,
 \end{center}
 which implies that $(L,\forall)$ is semisimple.
\end{proof}

\section{The modal logic of MTL-algebras with universal quantifiers}

In this section, we establish modal monoidal t-norm based propositional logics and prove that is completeness with respect to the variety of MTL-algebras with universal quantifiers, and obtain that a necessary and sufficient condition for the modal monoidal t-norm based logic to be semilinear.\medskip

Adapting for the propositional case the axiomatization of MTL-algebras with universal quantifiers defined by Definition 3.1, we can define modal monoidal t-norm based logics $\mathbf{MMTL}$ as a logic which contains monoidal t-norm based logics $\mathbf{MTL}$, the formulas as the axioms schemes:

(M1) $\square \alpha\Rightarrow \alpha$,

(M2) $\square(\alpha\Rightarrow \square\beta \Rightarrow \square\beta)\equiv (\square \alpha\Rightarrow \square \beta)\Rightarrow \square \beta$,

(M3) $\square(\square\alpha\Rightarrow \beta)\equiv \square \alpha\Rightarrow \square \beta$.\\
and closed under Modus Ponens $\mathbf{MP}$: $\alpha$, $\alpha\Rightarrow \beta\vdash \beta$ and Necessitation Rule $\mathbf{Nec}: \alpha/\square \alpha$.\medskip

Now, we remind some well known notions that are used. Let $T$ be a \emph{theory}, that is a set of formulas in $\mathbf{MMTL}$. A formula is a \emph{theorem} if there exists a natural number $n\geq 1$ and a sequence of formulas $\alpha_1,\cdots, \alpha_n=\alpha$ such that, for any $i\in [n]$, one of the following conditions holds:

 $(1)$ $\alpha_i$ is an axiom;

 $(2)$ $\alpha_i\in T$;

 $(3)$ there are $j,k\leq i$ such that $\alpha_j$ is $\alpha_k\Rightarrow \alpha_i$;

 $(4)$ there exists $j\leq i$ such that $\varphi_i$ is $ \square\alpha_j$. \\
 The sequence $\alpha_1,\cdots,\alpha_n=\alpha$ is a \emph{proof} for $\alpha$. In fact, a formula is a theorem will be simply denoted by $T\vdash \alpha$. A formula $\alpha$ will be called a \emph{theorem} if it is provable from the empty set. This will be denote by $\vdash \alpha$. In this case, a proof for $\alpha$ will be a sequence of formulas $\alpha_1,\cdots, \alpha_n=\alpha$ such that for any $i\in \{1,\cdots,n\}$, one of the above conditions (1)-(4) is satisfied.

 A theory $T$ is said to be linear when for any two formulas $\alpha$ and $\beta$, either
 \begin{center}
 $ \alpha\Rightarrow \beta\in T$ or $\beta\Rightarrow  \alpha\in T$.
 \end{center}

  Also, $T$ is prime when for any two formulas $ \alpha$ and $\beta$ such that
  \begin{center}
  $ \alpha\sqcup \beta\in T$, either $ \alpha\in T$ or $\beta\in T$.
  \end{center}

  The fact that $\mathbf{MTL}$ is semilinear, since it guarantees that
\begin{center}
$\vdash ( \alpha\Rightarrow\beta)\sqcup (\beta\Rightarrow \alpha)$

$ \alpha\sqcup\beta, \alpha\Rightarrow\beta\vdash \beta$ and $ \alpha\sqcup\beta,\beta\Rightarrow \alpha\vdash  \alpha$.
\end{center}
These three conditions imply that in $\mathbf{MTL}$ linear and prime theories coincide.\medskip

In order to show that $\mathbf{MMTL}$ is complete, we apply a general result from Abstract Algebraic Logic ($\mathbf{AAL}$ shortly). We start from by showing that $\mathbf{MMTL}$ is an implicative logic (in the sense of Rasiowa)\cite{Rasiowa}, which is a logic if there is a binary (either primitive or definable by a formula) connective $\Rightarrow$ of its language such that the following hold:

(R) $\vdash \alpha\Rightarrow\alpha$,

(MP) $\alpha,\alpha\Rightarrow \beta\vdash \beta$,

(T)  $\alpha\Rightarrow\beta$, $\beta\Rightarrow\gamma\vdash \alpha\Rightarrow \gamma$,

(Cong) $\alpha\Rightarrow\beta,\beta\Rightarrow\alpha\vdash c(\gamma_1,\cdots,\gamma_i,\alpha,\cdots,\gamma_n)\Rightarrow c(\gamma_1,\cdots,\gamma_i,\beta,\cdots,\gamma_n)$,

(W) $\alpha\vdash \beta\Rightarrow\alpha$.\medskip

Most of these axioms hold trivially for $\mathbf{MTL}$ and the following fact (Proposition 5.1(3)) shows that (Cong) is also satisfies for the new unary connectives $\square$.
\begin{proposition}\emph{The following formulas are provable in $\mathbf{MMTL}$:}
\begin{enumerate}[(1)]
  \item $\vdash \square \overline{1}$,
  \item $\vdash \square \alpha\Rightarrow \square\square\alpha$,
  \item $\alpha\equiv \beta\vdash \square \alpha\Rightarrow \square\beta$,
  \item $\square \alpha,\alpha\Rightarrow\beta\vdash \square \beta$,
  \item $\square\alpha,\square(\alpha\Rightarrow\beta)\vdash \square \beta$,
  \item $\square\alpha\Rightarrow \beta\vdash \square\alpha\Rightarrow \square \beta$,
  \item $\vdash \square(\alpha\Rightarrow\beta)\Rightarrow (\square \alpha\Rightarrow \square \beta)$.
\end{enumerate}
\end{proposition}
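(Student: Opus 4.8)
The plan is to read each of the seven formulas as the syntactic shadow of a corresponding fact from Proposition 3.4 and to produce Hilbert-style derivations that use only (M1), (M3), the rule $\mathbf{Nec}$, and a few standard $\mathbf{MTL}$-theorems (reflexivity (R), transitivity, and the first $\mathbf{MTL}$ axiom). The one structural point to keep in mind is that, in the consequence relation fixed in this section, $\mathbf{Nec}$ may be applied to any formula already occurring in a derivation (clause (4) of the definition of proof), hypotheses included; so boxing a locally derived formula is legitimate, and I will use this freely.

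First I would dispatch the easy cases. For (1), since $\vdash\overline{1}$ holds in $\mathbf{MTL}$ (it is $\alpha\Rightarrow\alpha$ under (R)), a single application of $\mathbf{Nec}$ gives $\vdash\square\overline{1}$. For (4), from $\square\alpha$ and (M1) I obtain $\alpha$ by $\mathbf{MP}$, then $\beta$ from $\alpha\Rightarrow\beta$ by $\mathbf{MP}$, and finally $\square\beta$ by $\mathbf{Nec}$. Statement (5) then reduces to (4): (M1) turns the hypothesis $\square(\alpha\Rightarrow\beta)$ into $\alpha\Rightarrow\beta$, and (4) applies with $\square\alpha$.

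The pivotal lemma is (6). Given the hypothesis $\square\alpha\Rightarrow\beta$, an application of $\mathbf{Nec}$ produces $\square(\square\alpha\Rightarrow\beta)$, and the forward direction of the $\equiv$ in (M3) (extracted by $\&$-elimination, using the $\mathbf{MTL}$ axiom $(\alpha\&\beta)\Rightarrow\alpha$) then yields $\square\alpha\Rightarrow\square\beta$ by $\mathbf{MP}$. Once (6) is in hand, (2) is just its instance at $\beta:=\square\alpha$ applied to the theorem $\square\alpha\Rightarrow\square\alpha$. Furthermore, (6) gives a derived box-monotonicity rule: from $\gamma\Rightarrow\delta$, (M1) and transitivity give $\square\gamma\Rightarrow\delta$, whence (6) gives $\square\gamma\Rightarrow\square\delta$; statement (3) is exactly this rule applied to the conjunct $\alpha\Rightarrow\beta$ extracted from $\alpha\equiv\beta$.

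The hard part will be (7), because it is an unconditional implication that internalizes box-monotonicity, whereas $\mathbf{Nec}$ only boxes theorems and does not by itself produce an implication between boxes. My route is threefold: first prove the $\mathbf{MTL}$-theorem $(\alpha\Rightarrow\beta)\Rightarrow(\square\alpha\Rightarrow\beta)$ by instantiating the first $\mathbf{MTL}$ axiom as $(\square\alpha\Rightarrow\alpha)\Rightarrow((\alpha\Rightarrow\beta)\Rightarrow(\square\alpha\Rightarrow\beta))$ and discharging its antecedent, which is (M1), by $\mathbf{MP}$; then apply the box-monotonicity rule of the previous paragraph to this theorem to get $\square(\alpha\Rightarrow\beta)\Rightarrow\square(\square\alpha\Rightarrow\beta)$; finally chain this with the forward direction of (M3), namely $\square(\square\alpha\Rightarrow\beta)\Rightarrow(\square\alpha\Rightarrow\square\beta)$, using transitivity of $\Rightarrow$. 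I expect the only points requiring genuine care to be the bookkeeping of which $\equiv$-direction of (M2)/(M3) is being invoked at each step, and the verification that applying $\mathbf{Nec}$ to hypotheses is admissible in the intended global consequence relation.
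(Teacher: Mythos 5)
Your derivations are correct, and for items (1)--(6) they run essentially parallel to the paper's: everything reduces to $\mathbf{Nec}$, (M1), the forward half of (M3), and transitivity, with only cosmetic reroutings (you obtain (1) by applying $\mathbf{Nec}$ to $\vdash\overline{1}$, where the paper cites (M1); you prove (4) directly by stripping the box with (M1) and re-boxing with $\mathbf{Nec}$, where the paper routes it through (3); you get (2) as an instance of (6), where the paper instantiates (M3) directly). You are also right to flag that applying $\mathbf{Nec}$ to hypothesis-derived formulas is sanctioned by clause (4) of the paper's definition of proof, which is exactly what the paper's own proof of (3) does tacitly. The substantive divergence is (7). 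The paper disposes of it with ``It follows from (5)'', but passing from the \emph{rule} $\square\alpha,\square(\alpha\Rightarrow\beta)\vdash\square\beta$ to the \emph{theorem} $\vdash\square(\alpha\Rightarrow\beta)\Rightarrow(\square\alpha\Rightarrow\square\beta)$ requires a deduction theorem, and the only deduction theorem available (Theorem 5.4) is proved later and itself invokes Proposition 5.1(7) in the $\mathbf{Nec}$ case --- so the paper's justification is at best elliptical and arguably circular. Your three-step derivation (instantiate the first $\mathbf{MTL}$ axiom to get $\vdash(\alpha\Rightarrow\beta)\Rightarrow(\square\alpha\Rightarrow\beta)$, apply the derived box-monotonicity rule to this theorem, then chain with the forward direction of (M3)) is a genuine Hilbert-style proof that uses $\mathbf{Nec}$ only on theorems and never appeals to deduction; this is the argument the paper should have given, and it is the main added value of your write-up.
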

\begin{proof}

$(1)$  It follows directly from (M1) taking $\alpha=\overline{1}$.

$(2)$  Taking $\beta=\square\alpha$, we have $\square\overline{1}\equiv \square \alpha\Rightarrow \square\square\alpha$. Also, by (1) and $\mathbf{MP}$, we have $\vdash \square \alpha\Rightarrow \square \square\alpha$.

$(3)$ By (M1), we have $\vdash \square \alpha\Rightarrow \alpha$, hence by transitivity of implication, we obtain $\alpha\Rightarrow \beta\vdash \square \alpha\Rightarrow \beta$. Also, using $\mathbf{Nec}$, we have $\alpha\Rightarrow \beta\vdash \square(\square \alpha\Rightarrow \beta)$, and using (M3), we obtain $\alpha\Rightarrow \beta\vdash \square \alpha\Rightarrow \square \beta$. Similarity, we can prove $\beta\Rightarrow \alpha\vdash \square \alpha\Rightarrow \square \beta$.

$(4)$ It follows directly from $(3)$ and $\mathbf{MP}$.

$(5)$ It follows directly from $(1)$, $(4)$ and $\mathbf{MP}$.

$(6)$ It follows from (M1), $(4)$ and $\mathbf{MP}$.

$(7)$ It follows from (5).
\end{proof}

  Thus $\mathbf{MMTL}$ is an implicative logic and hence is algebraizable in the sense of Blok and Pigozzi [4]. This gives us immediately the completeness with respect to its associated variety $\mathbb{UMTL}$ of  UMTL-algebras.

\begin{theorem} \emph{Let $T$ be a theory and $\alpha$ be a formula over $\mathbf{MMTL}$. Then the following statements are equivalent:}
\begin{enumerate}[(1)]
  \item \emph{$T\vdash \alpha$,
  \item for each UMTL-algebra $(L,\forall)$ and for every model $e$ of $T$, $e(\alpha)=1$,
  \item $[\alpha]_T=[\overline{1}]_T$ in $\mathbf{MMTL}$.}
\end{enumerate}
\end{theorem}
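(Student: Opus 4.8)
The plan is to leverage the fact, just established in the run-up to this theorem, that $\mathbf{MMTL}$ is an implicative logic in the sense of Rasiowa and is therefore algebraizable in the sense of Blok and Pigozzi, with the variety $\mathbb{UMTL}$ serving as its equivalent algebraic semantics. The three conditions will be connected by a cycle $(1)\Rightarrow(2)\Rightarrow(3)\Rightarrow(1)$ whose pivotal object is the Lindenbaum--Tarski algebra of $T$. The soundness direction and the term-model construction carry the concrete content; the abstract algebraizability result is what guarantees these fit together.

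For $(1)\Rightarrow(2)$ (soundness) I would argue by induction on the length of a proof of $\alpha$ from $T$. The base cases require that each axiom scheme, namely the $\mathbf{MTL}$ axioms together with (M1)--(M3), be valid in every UMTL-algebra under every valuation; this holds because the defining identities (U1)--(U3) of Definition 3.1 are precisely the algebraic transcriptions of (M1)--(M3). For the inductive step, $\mathbf{MP}$ preserves the value $1$ by residuation, while the Necessitation rule preserves it because, whenever $e(\beta)=1$, one has $e(\square\beta)=\forall e(\beta)=\forall 1=1$ by Proposition 3.4(2). Hence every formula provable from $T$ evaluates to $1$ in every model of $T$.

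For $(2)\Rightarrow(3)$ I would pass to the Lindenbaum--Tarski algebra, setting $\alpha\equiv_T\beta$ iff $T\vdash\alpha\Rightarrow\beta$ and $T\vdash\beta\Rightarrow\alpha$. The implicative axioms (R), (T), (W) and especially (Cong) guarantee that $\equiv_T$ is a congruence on the formula algebra compatible with $\&,\Rightarrow,\sqcap$ and with the modal connective $\square$; compatibility with $\square$ is exactly Proposition 5.1(3). Consequently the quotient $\mathbf{MMTL}/{\equiv_T}$ is a UMTL-algebra, and the canonical valuation $e_0\colon\alpha\mapsto[\alpha]_T$ is a model of $T$: for each $\gamma\in T$ we have $T\vdash\gamma$, so by (W) both $T\vdash\gamma\Rightarrow\overline{1}$ and $T\vdash\overline{1}\Rightarrow\gamma$, whence $[\gamma]_T=[\overline{1}]_T=1$. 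Applying hypothesis $(2)$ to this particular model yields $e_0(\alpha)=[\alpha]_T=[\overline{1}]_T$.

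Finally, $(3)\Rightarrow(1)$ is immediate from the definition of $\equiv_T$: if $[\alpha]_T=[\overline{1}]_T$ then in particular $T\vdash\overline{1}\Rightarrow\alpha$, and since $\vdash\overline{1}$ is a theorem already in $\mathbf{MTL}$, one application of $\mathbf{MP}$ gives $T\vdash\alpha$. The single delicate point in the whole argument is the verification that $\mathbf{MMTL}/{\equiv_T}$ genuinely lies in $\mathbb{UMTL}$, i.e.\ that the equational axioms (U1)--(U3) descend to the equivalence classes; but this is exactly what algebraizability delivers, and concretely it reduces to Proposition 5.1 together with the classical $\mathbf{MTL}$-part of the Lindenbaum--Tarski construction, so I expect no surprises there.
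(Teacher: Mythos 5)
Your proposal is correct and takes essentially the same route as the paper: the paper offers no explicit proof of this theorem, justifying it solely by the preceding remark that $\mathbf{MMTL}$ is an implicative logic in the sense of Rasiowa, hence algebraizable in the sense of Blok and Pigozzi with $\mathbb{UMTL}$ as its equivalent algebraic semantics. Your soundness induction and Lindenbaum--Tarski construction simply supply the standard details behind that one-line appeal, and they check out.
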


In $\mathbf{MMTL}$, the usual form of the deduction theorem does not hold. Indeed,
\begin{center}
$\alpha\vdash \square \alpha$, but $\nvdash \alpha\Rightarrow \square\alpha$,
\end{center} see the following example.

\begin{example}\emph{Let $(L,\forall)$ be a UMTL-algebra in Example 4.3. Then for any evaluation $e$ in this algebra, if $e(\alpha)=1$, then $e(\square \alpha)=1$. But for $e(\alpha)=\frac{1}{2}$ we have $e(\square\alpha)=0$, and hence $e(\alpha\Rightarrow \square\alpha)=0$. }
\end{example}

Actually, $\mathbf{MMTL}$ enjoys the same form of deduction theorem holding for logics with the $\triangle$ in \cite{Hajek}.

\begin{theorem}\emph{$T,\alpha\vdash \beta$ if and only if $T\vdash \square\alpha\Rightarrow\beta$.}
\end{theorem}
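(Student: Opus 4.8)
The plan is to prove the two implications separately: the forward (``if'') direction is a short direct derivation, and the converse (``only if'') is an induction on the length of a proof. For the easy direction, assume $T\vdash\square\alpha\Rightarrow\beta$. Enlarging the theory gives $T,\alpha\vdash\square\alpha\Rightarrow\beta$. Since $\alpha$ now belongs to the theory we have $T,\alpha\vdash\alpha$, so the Necessitation rule $\mathbf{Nec}$ yields $T,\alpha\vdash\square\alpha$, and one application of $\mathbf{MP}$ to $\square\alpha$ and $\square\alpha\Rightarrow\beta$ gives $T,\alpha\vdash\beta$. This direction uses $\mathbf{Nec}$ essentially, and it is precisely here that the failure of the classical form ($\nvdash\alpha\Rightarrow\square\alpha$, cf.\ Example 5.3) is repaired by inserting $\square$.

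For the converse, assume $T,\alpha\vdash\beta$ and fix a proof $\gamma_1,\dots,\gamma_m=\beta$ of $\beta$ from $T\cup\{\alpha\}$. I would prove by induction on $i$ that $T\vdash\square\alpha\Rightarrow\gamma_i$, and then read off the case $i=m$. The cases split according to how $\gamma_i$ enters the proof. If $\gamma_i$ is an axiom or a member of $T$, then $T\vdash\gamma_i$ and weakening (W) gives $T\vdash\square\alpha\Rightarrow\gamma_i$. If $\gamma_i$ is the hypothesis $\alpha$ itself, the required formula $\square\alpha\Rightarrow\alpha$ is precisely axiom (M1). If $\gamma_i=\square\gamma_j$ arises by $\mathbf{Nec}$ from an earlier $\gamma_j$, the induction hypothesis gives $T\vdash\square\alpha\Rightarrow\gamma_j$, and Proposition 5.1(6), i.e.\ the admissible rule $\square\alpha\Rightarrow\gamma\vdash\square\alpha\Rightarrow\square\gamma$, upgrades this to $T\vdash\square\alpha\Rightarrow\square\gamma_j=\gamma_i$. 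This is exactly the step for which Proposition 5.1 was prepared, and it goes through cleanly.

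The main obstacle is the $\mathbf{MP}$ case: if $\gamma_i$ follows from $\gamma_j$ and $\gamma_k=\gamma_j\Rightarrow\gamma_i$ with $j,k<i$, the induction hypothesis only supplies $T\vdash\square\alpha\Rightarrow\gamma_j$ and $T\vdash\square\alpha\Rightarrow(\gamma_j\Rightarrow\gamma_i)$, and combining these into $T\vdash\square\alpha\Rightarrow\gamma_i$ is \emph{not} an $\mathbf{MTL}$-tautology for an arbitrary antecedent: residuation only yields $(\square\alpha\odot\square\alpha)\Rightarrow\gamma_i$, that is $(\square\alpha)^2\Rightarrow\gamma_i$. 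Thus the step rests on the $\square$-image behaving contractively, $\vdash\square\alpha\Rightarrow(\square\alpha\odot\square\alpha)$, which is exactly the idempotency feature that Baaz's $\triangle$ enjoys and that makes the analogous theorem work in \cite{Hajek}.

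I would therefore isolate this contraction property of $\square$ as the decisive lemma and attempt to derive it from the defining identities (U1)--(U3) together with Proposition 3.4 (in particular the fixed-point behaviour $\forall\forall x=\forall x$ and $\forall(\forall x\odot\forall y)=\forall x\odot\forall y$). I expect the genuine work, and the genuine subtlety of the statement, to concentrate here: one must check carefully whether $\square\alpha$ really absorbs $\odot$-powers rather than merely giving $(\square\alpha)^n\Rightarrow\gamma_i$ for some $n$, since the whole step — and hence the clean single-$\square$ form of the deduction theorem rather than a power-indexed local version — stands or falls on this contraction property. I would scrutinise it against the linearly ordered examples (notably the $n$-valued construction of Example 3.3) before declaring the $\mathbf{MP}$ case complete.
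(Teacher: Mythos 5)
Your decomposition is exactly the paper's: the easy direction via $\mathbf{Nec}$ followed by $\mathbf{MP}$, and the converse by induction on the derivation with the same four cases (the hypothesis $\alpha$ handled by (M1), members of $T$ and axioms by weakening, the $\mathbf{Nec}$ case by Proposition 5.1(6), and the $\mathbf{MP}$ case by combining the two induction hypotheses through residuation). The one step you decline to complete --- passing from $(\square\alpha\odot\square\alpha)\Rightarrow\gamma_i$ to $\square\alpha\Rightarrow\gamma_i$, i.e.\ the contraction $\vdash\square\alpha\Rightarrow(\square\alpha\,\&\,\square\alpha)$ --- is precisely the step the paper does not justify either: its proof appeals to Proposition 5.1(2), which is $\square\alpha\Rightarrow\square\square\alpha$ (idempotence of the modality under \emph{composition}), not idempotence of $\square\alpha$ under $\odot$, and the former does not yield the latter.

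Your suspicion that the contraction property fails is correct, and a counterexample is even simpler than Example 3.3: the identity map satisfies (U1)--(U3) on any MTL-chain, so the three-element MV-chain $\{0,\frac{1}{2},1\}$ with $\forall=\mathrm{id}$ is a linearly ordered UMTL-algebra in which $\forall x\odot\forall x=0<\frac{1}{2}=\forall x$ for $x=\frac{1}{2}$. Now $\alpha\vdash\square\alpha\,\&\,\square\alpha$ is derivable ($\mathbf{Nec}$ followed by the $\mathbf{MTL}$-derivation of $\gamma\,\&\,\gamma$ from $\gamma$), so the theorem as stated would force $\vdash\square\alpha\Rightarrow(\square\alpha\,\&\,\square\alpha)$; but evaluating $e(\alpha)=\frac{1}{2}$ in the algebra above gives $e(\square\alpha\Rightarrow(\square\alpha\,\&\,\square\alpha))=\frac{1}{2}\neq 1$, so by the soundness half of Theorem 5.2 this is not a theorem. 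Hence the single-$\square$ deduction theorem is false for $\mathbf{MMTL}$ as axiomatized; what the induction actually delivers in the $\mathbf{MP}$ case is the local form $T,\alpha\vdash\beta$ iff $T\vdash(\square\alpha)^n\Rightarrow\beta$ for some $n\geq 1$, unless an idempotency axiom such as $\square\alpha\Rightarrow(\square\alpha\,\&\,\square\alpha)$ is added (which is exactly what makes the analogous theorem work for Baaz's $\triangle$). Your instinct to isolate this as the decisive lemma and to test it on concrete chains before declaring the $\mathbf{MP}$ case closed is exactly right; the gap is not in your write-up but in the statement and in the paper's own argument.
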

\begin{proof}  We prove by induction on every formula $\alpha_i$ $(1\leq i\leq n)$ of the given derivation of $\beta$ from $T\cup\alpha$ that $T\vdash \square \alpha\Rightarrow \alpha_i$.

If $\alpha_i=\alpha$, then the result follows due to (M1). If $\alpha_i\in T$ or is an instance of an axiom, then the result follows using $\mathbf{MP}$ and the derivability of the schema $\alpha_i\Rightarrow (\square \alpha\Rightarrow \alpha_i)$.

If $\alpha_i$ comes by  application of $\mathbf{MP}$ on previous formulas in the derivation, then the result follows, because from $\square \alpha\Rightarrow \alpha_k$ and $\square \alpha\Rightarrow (\alpha_k\Rightarrow \alpha_i)$ we may derive $(\square\alpha\odot \square\alpha)\Rightarrow (\alpha_k\odot(\alpha_k\Rightarrow \alpha_i)$ and hence also $\square \alpha\Rightarrow\alpha_i$, using transitivity of $\Rightarrow$ applied to Proposition 5.1.(2) and $(\alpha_k\odot(\alpha_k\Rightarrow \alpha_i)\Rightarrow \alpha_i$.

If $\alpha_i=\square \alpha_k$  comes using $\mathbf{Nec}$ from $\alpha_k$, then from $\square \alpha\Rightarrow \alpha_k$, we may derive $\square \alpha\Rightarrow\square \alpha_k$ using Proposition 5.1(7).

Conversely, to the derivation given by the hypothesis add a step
with $\alpha$. In the next step put $\square\alpha$, which follows from the previous formula using $\mathbf{Nec}$. Finally, derive $\beta$ using $\mathbf{MP}$.

\end{proof}

It is well known that $\mathbf{MTL}$ is algebraizable and strongly complete with respect to the class of linearly ordered MTL-algebras \cite{Esteva}. However, unlike the case of $\mathbf{MTL}$, $\mathbf{MMTL}$ is not semilinear, that is, it is not complete with respect to the class of linearly ordered UMTL-algebras. The reason is that the  {\bf disjunction form of the rule $\square$},
\begin{center}
from $\alpha\sqcup \beta$ derive $\alpha\sqcup \square\beta$,
\end{center} is not derivable in $\mathbf{MMTL}$, see the following example.

\begin{example}\emph{Let $L$ be an MTL-algebra in Example 3.2. Now, we define $\forall$ as follows:}
\begin{center}
$\forall x=
\begin{cases}
1, & x=1 \\
b, & x=b,c\\
d, & x=d\\
0, & x=0,a
\end{cases}$
\end{center}
\emph{Then $(L,\forall)$ is a UMTL-algebra. Indeed, it is clear that $d\vee c=1$, while $d\vee \forall c=d\vee b=d\neq 1$.}
\end{example}

Then it remains the problem of axiomatizing the minimal semilinear extension of $\mathbf{MMTL}$, that is, we provide conditions under which the logic $\mathbf{MMTL_\ell}$ extending $\mathbf{MMTL}$ is semilinear. In fact, we have the following result regarding to axiomatization of the least semilinear extension related to $\mathbf{MMTL}$.

\begin{theorem} \emph{Let $\mathbf{MMTL_\ell}$  be an expansion of $\mathbf{MMTL}$ plus the disjunction form of the rule $\square$. Then $\mathbf{MMTL_\ell}$ is semilinear, i.e., $\mathbf{MMTL_\ell}$ is complete with respect to the class of linearly ordered UMTL-algebras.}
\end{theorem}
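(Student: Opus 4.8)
The plan is to bypass any direct Lindenbaum--Tarski construction and instead exploit the algebraizability of $\mathbf{MMTL}$ together with the characterizations of representable UMTL-algebras already established in Section~4. Recall from Theorem 5.2 that $\mathbf{MMTL}$ is algebraizable, in the sense of Blok and Pigozzi, with equivalent algebraic semantics the variety $\mathbb{UMTL}$, the defining equation being $p\approx\overline{1}$ and the equivalence formula $p\Leftrightarrow q$. Since $\mathbf{MMTL_\ell}$ is obtained from $\mathbf{MMTL}$ by adjoining a single inference rule, it is again algebraizable with the very same translation, and its equivalent algebraic semantics is the subclass of $\mathbb{UMTL}$ axiomatized, relative to $\mathbb{UMTL}$, by the quasi-equation obtained from translating the new rule. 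Thus the entire argument reduces to identifying this subclass and showing that it is generated by the linearly ordered UMTL-algebras.

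First I would compute the translation of the disjunction form of the rule $\square$. Under the map $\gamma\mapsto(\gamma\approx\overline{1})$, the rule ``from $\alpha\sqcup\beta$ derive $\alpha\sqcup\square\beta$'' becomes the quasi-equation
\begin{center}
$x\vee y\approx 1\ \Rightarrow\ x\vee\forall y\approx 1$,
\end{center}
where $\sqcup$ is interpreted by $\vee$ and $\square$ by $\forall$. This is precisely condition~(3) of Theorem 4.7. Consequently the equivalent algebraic semantics of $\mathbf{MMTL_\ell}$ is exactly the class of those UMTL-algebras $(L,\forall)$ that satisfy this quasi-equation.

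Next I would invoke the equivalences already proved in Section~4. By Theorem 4.7 the quasi-equation above holds in $(L,\forall)$ if and only if $(L,\forall)$ is representable, and by Theorem 4.10 this is in turn equivalent to $(L,\forall)$ being strong; since strongness is the genuine identity $\forall(x\vee y)=\forall x\vee\forall y$, the class in question is the \emph{variety} of representable UMTL-algebras, as already observed in Remark 4.9(b). Hence $\mathbf{MMTL_\ell}$ is complete with respect to the variety of representable UMTL-algebras.

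Finally I would pass from representable algebras to chains. By Definition 4.6 every representable UMTL-algebra is a subdirect product of linearly ordered UMTL-algebras; since the $\mathbf{MMTL_\ell}$-consequence relation is preserved under subalgebras and products and is reflected by a subdirect representation onto its linearly ordered factors (an evaluation falsifying a consequence in $(L,\forall)$ projects, via the separating projections, to an evaluation falsifying it in some linearly ordered factor), completeness with respect to the representable UMTL-algebras coincides with completeness with respect to the class of all linearly ordered UMTL-algebras. That is exactly the assertion that $\mathbf{MMTL_\ell}$ is semilinear. The one delicate point is the bridge in the second paragraph: one must verify carefully that the Blok--Pigozzi translation turns the disjunctive rule into \emph{precisely} condition~(3) of Theorem 4.7, and not some formally weaker or stronger quasi-equation; once this identification is secured, Theorems 4.7 and 4.10 do all the remaining work, and no separate proof-by-cases or prime-theory extension argument is required.
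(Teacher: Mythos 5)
Your argument is correct, and it is genuinely more of a proof than the one the paper gives: the paper disposes of this theorem in one sentence by appealing to ``a well known consequence'' of the general axiomatization of least semilinear extensions, whereas you reconstruct that consequence concretely from the paper's own Section~4. Both routes pivot on the same fact --- that the disjunction form of the $\square$-rule is the logical counterpart of condition (3) of Theorem 4.7 --- but you make the bridge explicit: algebraizability transfers from $\mathbf{MMTL}$ to the rule-extension $\mathbf{MMTL_\ell}$ with the same transformers, the new rule schema translates (in fresh variables, which is why it lands on 4.7(3) verbatim) to the quasi-equation $x\vee y\approx 1\ \Rightarrow\ x\vee\forall y\approx 1$, Theorem 4.7 identifies the resulting relative subquasivariety with the representable UMTL-algebras (a variety by Theorem 4.10 and Remark 4.9(b)), and the subdirect decomposition into chains finishes the job because the separating projections preserve models of the theory while falsifying the conclusion in some linearly ordered factor. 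What your route buys is a self-contained verification that does not import the metatheorem on semilinear extensions; what it costs is reliance on the standard Blok--Pigozzi facts that finitary rule extensions of an algebraizable logic are algebraizable with the same translations and correspond to relative subquasivarieties --- facts the paper uses only implicitly. If you write this up, state explicitly the (easy) converse ingredient that every linearly ordered UMTL-algebra satisfies the quasi-equation, which follows from Theorem 4.7(2) since in a chain one of $x\rightarrow y$, $y\rightarrow x$ equals $1$; this is needed for the equivalence of completeness over representable algebras and completeness over chains.
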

\begin{proof} This is a well known consequence of the Axiomatization of the least semilinear extension using the representable UMTL-algebra obtained as the free UMTL-algebras by the disjunction form of the rule $\square$.
\end{proof}

\section{Conclusions}

 Motivated by  previous research about quantifiers on algebras, we investigated MTL-algebras with universal quantifiers. In this paper, we study some properties of UMTL-algebras and discuss relations among UMTL-algebras, UMV-algebras and monadic Boolean algebras. Then we characterize classes of UMTL-algebras and give some representations of them. Finally, we establish monoidal t-norm based propositional logics and study the semilinearity of them. Since the above topics are of current interest, we suggest further directions of research:
\begin{enumerate}[(1)]
  \item Constructing topological spaces and giving some topological representations of UMTL-algebras.
  \item Focusing on varieties of UMTL-algebras. In particular, one can investigate locally finite, finitely approximated and splitting varieties of UMTL-algebras as well as varieties with the disjunction and  existence properties.
\end{enumerate}

\medskip
\noindent\textbf{Acknowledgments}
\medskip

This study was funded by a grant of National Natural Science Foundation of China (61976244,11961016,11901451), the Innovation Talent Promotion Plan of Shaanxi Province for Young Sci-Tech New Star (2017KJXX-60) and the Natural Science Basic Research Plan in Shaanxi Province of China (2019JQ-816) and Natural Science Foundation of Education Committee of Shannxi Province (19JK0653).


\section*{References}


\begin{thebibliography}{00}
\bibitem{Esteva} F. Esteva, L. Godo, Monoidal t-norm based logic: towards a logic for left-continuous t-norms, Fuzzy Sets and Systems, {\bf 124} (2001), 271-288.
\bibitem{Jenei} S.Jenei, F. Montagan, A proof of standard completeness for Esteva and Godo's logic, Studia Logica, {\bf 70} (2002), 183-192.
\bibitem{Hajek} P. H\'{a}jek, Metamathematics of Fuzzy Logic, Kluwer Academic Publishers, Dordrecht, 1998.
\bibitem{Halmos} R. P. Halmos, Algebraic logic, I. Monadic boolean algebras, Composition Mathematica, {\bf 12} (1955), 217-249.
\bibitem{Janowitz} M. F. Janowitz, Quantifiers and orthomodular lattices, Pacific Journal of Mathematics, {\bf 13} (1963), 1241-1249.
\bibitem{Monteiro1} A. Monteiro, Noemalidad de las\'{a}lgebras de Heyting mon\'{a}dicas, Actas de las $X$ Jornadas de la Uni\'{o}n Mathematical Argentina, Bah\'{i}a Blanca, (1957), 50-51.
\bibitem{Cignoli} R. Cignoli, Quantifiers on distributive lattices, Discrete Mathematics, {\bf 96} (1991), 183-197.
\bibitem{Lattanzi} M. B. Lattanzi, Wajsberg algebras with U-operators, Journal of Multiple-Valued Logic and Soft Computing, {\bf 10} (2004), 315-338.
\bibitem{Nola} A. Di Nola, R. Grigolia, On monadic MV-algebras, Annals of Pure and Applied Logic, {\bf 128} (2004), 125-139.
\bibitem{Nola1} A. Di Nola, R. Grigolia, G. Lenzi, Topological spaces of monadic MV-algebras, Soft Computing, {\bf 23} (2019), 375-381.
\bibitem{Daniel} D. D. Daniel, Quantifiers on BL-algebras, Analele Universitatill Bucurest Matematica Informatica, {\bf 50} (2001), 29-42.
\bibitem{Xin} J. T. Wang, P. F. He, Y. H, She,  Monadic NM-algebras, Logic Journal of the IGPL, 2019, doi.org/10.1093/jigpal/jzz005.
\bibitem{Xin1} X. L. Xin, Y. L. Fu, Y. Y. Lai, J. T. Wang, Monadic pseudo BCI-algebras and corresponding logics, Soft Computing, {\bf 23} (2019), 1499-1510.
\bibitem{Noguera6} C. Noguera, Algebraic study of axiomatic extensions of triangular norm based fuzzy logics. Ph.D. thesis, IIIA-CSIC, 2006.
\bibitem{Blok} W. J. Blok, D. Pigozzi, Algebraizable logics, Memoirs of the American Mathematical Society, 1989, number 396, vol 77.
\bibitem{Wang} J. T. Wang, P. F. He, A. B. Saeid, Stabilizers in MTL-algebras, Journal of Intelligent and Fuzzy Systems, {\bf 35} (2018), 717-727.
\bibitem{Wang1} J. T. Wang, A. B. Saeid, P. F. He, Similarity MTL-algebras and their corresponding logics, Journal of Multiple-Valued Logic and Soft Computing, {\bf 32} (2019), 607-628.
\bibitem{Rasiowa} H. Rasiowa, An algebraic approach to non-classical logic, North Holland, Amsterdam, 1974.
\bibitem{Zhang2} J. L. Zhang, Topological properties of prime filters in MTL-algebras and fuzzy set representations for MTL-algebras, Fuzzy Sets and Systems, {\bf178} (2011), 38-53.

\end{thebibliography}
\end{document}